%


%
\documentclass[a4paper]{amsart}
%


%
\usepackage{color}
\usepackage[latin1]{inputenc}
\usepackage[T1]{fontenc}
\usepackage{amsfonts}
\usepackage{amssymb}
\usepackage{amsmath}
\usepackage{amsthm}
\usepackage{stmaryrd}
\usepackage{eufrak}
%


%
\usepackage{graphicx,type1cm,eso-pic,color}
\usepackage{pstricks,pst-plot,pstricks-add}
\usepackage{float}
%


%
%


%
\newtheorem{theorem}{Theorem}[section]
\newtheorem{lemma}[theorem]{Lemma}
\newtheorem{proposition}[theorem]{Proposition}
\newtheorem{corollary}[theorem]{Corollary}
\newtheorem{definition}[theorem]{Definition}
\newtheorem{assumption}[theorem]{Assumption}

%


%
\begin{document}
\setlength\arraycolsep{2pt}
\title{Nonparametric Estimation in Fractional SDE}
\author{Fabienne COMTE*}
\address{*Laboratoire MAP5, Universit\'e Paris Descartes, Paris, France}
\email{fabienne.comte@parisdescartes.fr}
\author{Nicolas MARIE$^\dag$}
\address{$^\dag$Laboratoire Modal'X, Universit\'e Paris Nanterre, Nanterre, France}
\email{nmarie@parisnanterre.fr}
\address{$^\dag$ESME Sudria, Paris, France}
\email{nicolas.marie@esme.fr}
\keywords{}
\date{}
\maketitle
\noindent
%


%
\begin{abstract}
This paper deals with the consistency and a rate of convergence for a Nadaraya-Watson estimator of the drift function of a stochastic differential equation driven by an additive fractional noise. The results of this paper are obtained via both some long-time behavior properties of Hairer and some properties of the Skorokhod integral with respect to the fractional Brownian motion. These results are illustrated on the fractional Ornstein-Uhlenbeck process.
\end{abstract}
%
\tableofcontents
%


%
\section{Introduction}
Consider the stochastic differential equation
\begin{equation}\label{main_equation}
X(t) =
X_0 +\int_{0}^{t}b(X(s))ds +
\sigma B(t),
\end{equation}
where $B$ is a fractional Brownian motion of Hurst index $H\in ]1/2,1[$, $b :\mathbb R\rightarrow\mathbb R$ is a continuous map and $\sigma\in\mathbb R^*$.
\\
\\
Along the last two decades, many authors studied statistical inference from observations drawn from stochastic differential equations driven by fractional Brownian motion.
\\
Most references on the estimation of the trend component in Equation (\ref{main_equation}) deals with parametric estimators. In Kleptsyna and Le Breton \cite{KB01} and Hu and Nualart \cite{HN10}, estimators of the trend component in Langevin's equation are studied. Kleptsyna and Le Breton \cite{KB01} provide a maximum likelihood estimator, where the stochastic integral with respect to the solution of Equation (\ref{main_equation}) returns to an It\^o integral. In \cite{TV07}, Tudor and Viens extend this estimator to equations with a drift function depending linearly on the unknown parameter. Hu and Nualart \cite{HN10} provide a least square estimator, where the stochastic integral with respect to the solution of Equation (\ref{main_equation}) is taken in the sense of Skorokhod. In \cite{HNZ18}, Hu, Nualart and Zhou extend this estimator to equations with a drift function depending linearly on the unknown parameter.
\\
In Tindel and Neuenkirch \cite{NT14}, the authors study a least square-type estimator defined by an objective function tailor-maid with respect to the main result of Tudor and Viens \cite{TV09} on the rate of convergence of the quadratic variation of the fractional Brownian motion. In \cite{CT13}, Chronopoulou and Tindel provide a likelihood based numerical procedure to estimate a parameter involved in both the drift and the volatility functions in a stochastic differential equation with multiplicative fractional noise.
\\
On the nonparametric estimation of the trend component in Equation (\ref{main_equation}), there are only few references. Saussereau \cite{SAUSSEREAU14} and Mishra and Prakasa Rao \cite{MP11} study the consistency of some Nadaraya-Watson's-type estimators of the drift function $b$ in Equation (\ref{main_equation}). On the nonparametric estimation in It\^o's calculus framework, the reader is referred to Kutoyants \cite{KUTOYANTS04}.
\\
\\
Let $K :\mathbb R\rightarrow\mathbb R_+$ be a kernel that is a nonnegative function with integral equal to $1$. The paper deals with the consistency and a rate of convergence for the Nadaraya-Watson estimator
\begin{equation}\label{main_estimator}
\widehat b_{T,h}(x) :=
\frac{\displaystyle{\int_{0}^{T}K\left(\frac{X(s) - x}{h}\right)\delta X(s)}}
{\displaystyle{\int_{0}^{T}K\left(\frac{X(s) - x}{h}\right)ds}}
\textrm{ $;$ }
x\in\mathbb R
\end{equation}
of the drift function $b$ in Equation (\ref{main_equation}), where the stochastic integral with respect to $X$ is taken in the sense of Skorokhod. Since to compute the Skorokhod integral is a challenge, by denoting by $X_{x_0}$ the solution of Equation (\ref{main_equation}) with initial condition $x_0\in\mathbb R$, the following estimator is also studied:
\begin{small}
\begin{eqnarray}\nonumber 
 \widehat b_{T,h,\varepsilon}(x) & := &
 \frac{\displaystyle{\int_{0}^{T}K\left(\frac{X_{x_0}(s) - x}{h}\right)d X_{x_0}(s)}}
 {\displaystyle{\int_{0}^{T}K\left(\frac{X_{x_0}(s) - x}{h}\right)ds}}\\
 \label{b_epsilon}
 & &
 -\alpha_H\sigma^2\frac{
 \displaystyle{\frac{1}{h}\int_{0}^{T}
 \int_{0}^{u}K'\left(\frac{X_{x_0}(u) - x}{h}\right)\frac{X_{x_0 +\varepsilon}(u) - X_{x_0}(u)}{X_{x_0 +\varepsilon}(v) - X_{x_0}(v)}|u - v|^{2H - 2}dvdu}}{\displaystyle{\int_{0}^{T}K\left(\frac{X_{x_0}(s) - x}{h}\right)ds}}
\end{eqnarray}
\end{small}
\newline
with $\varepsilon > 0$ and $x\in\mathbb R$. In this second estimator, the stochastic integral is taken pathwise. It depends on $H$, but an estimator of this parameter is for instance provided in Kubilius and Skorniakov \cite{KS16}.
\\
As detailed in Subsection 2.2, the Skorokhod integral is defined via the divergence operator which is the adjoint of the Malliavin derivative for the fractional Brownian motion. If $H = 1/2$, then the Skorokhod integral coincides with It\^o's integral on its domain. When $H\in ]1/2,1[$, it is more difficult to compute the Skorokhod integral, but not impossible as explained at the end of Subsection 2.2. Note that, the pathwise stochastic integral defined in Subsection 2.1 would have been a more natural choice, but unfortunately, it does not provide a consistent estimator (see Proposition \ref{pathwise_NW}).
\\
 Clearly, to be computable, the estimator $\widehat b_{T,h,\varepsilon}(x)$ requires an observed path of the solution of Equation (\ref{main_equation}) for two close but different values of the initial condition. This is not possible in any context, but we have in mind the following application field: if $t\mapsto X_{x_0}(\omega,t)$ denotes the concentration of a drug along time during its elimination by a patient $\omega$ with initial dose $x_0 > 0$, $t\mapsto X_{x_0 +\varepsilon}(\omega,t)$ could be approximated by replicating the exact same protocol on patient $\omega$, but with initial dose $x_0 +\varepsilon$ after the complete elimination of the previous dose.
\\
We mention that we do not study the additional error which occurs when only discrete time observations with step $\Delta$ on $[0, T]$ ($T = n\Delta$) are available. Formula (\ref{b_epsilon}) has then to be discretized and a study in the spirit of Saussereau \cite{SAUSSEREAU14} (Section 4.3) must be conducted.
\\
Section 2 deals with some preliminary results on stochastic integrals with respect to the fractional Brownian motion and an ergodic theorem for the solution of Equation (\ref{main_equation}). The consistency and a rate of convergence of the Nadaraya-Watson estimator studied in this paper are stated in Section 3. Almost all the proofs of the paper are provided in Section 4.
\\
\\
\textbf{Notations:}
\begin{enumerate}
 \item The vector space of Lipschitz continuous maps from $\mathbb R$ into itself is denoted by $\textrm{Lip}(\mathbb R)$ and equipped with the Lipschitz semi-norm $\|.\|_{\textrm{Lip}}$ defined by
 \begin{displaymath}
 \|\varphi\|_{\textrm{Lip}} :=
 \sup\left\{
 \frac{|\varphi(y) -\varphi(x)|}{|y - x|}
 \textrm{ ; }
 x,y\in\mathbb R
 \textrm{ and }
 x\not= y\right\}
 \end{displaymath}
 for every $\varphi\in\textrm{Lip}(\mathbb R)$.
 \item For every $m\in\mathbb N$,
 \begin{displaymath}
 C_{b}^{m}(\mathbb R) :=
 \left\{\varphi\in C^m(\mathbb R) :
 \max_{k\in\llbracket 0,m\rrbracket}
 \|\varphi^{(k)}\|_{\infty} <\infty\right\}.
 \end{displaymath}
 \item For every $m\in\mathbb N^*$,
 \begin{displaymath}
 \textrm{Lip}_{b}^{m}(\mathbb R) :=
 \left\{\varphi\in C^m(\mathbb R) :
 \varphi\in\textrm{Lip}(\mathbb R)
 \textrm{ and }
 \max_{k\in\llbracket 1,m\rrbracket}
 \|\varphi^{(k)}\|_{\infty} <\infty\right\}
 \end{displaymath}
 and for every $\varphi\in\textrm{Lip}_{b}^{m}(\mathbb R)$,
 \begin{displaymath}
 \|\varphi\|_{\textrm{Lip}_{b}^{m}} :=
 \|\varphi\|_{\textrm{Lip}}\vee
 \max_{k\in\llbracket 1,m\rrbracket}\|\varphi^{(k)}\|_{\infty}.
 \end{displaymath}
 The map $\|.\|_{\textrm{Lip}_{b}^{m}}$ is a semi-norm on $\textrm{Lip}_{b}^{m}(\mathbb R)$.
 \\
 \\
 Note that for every $m\in\mathbb N^*$,
 \begin{displaymath}
 C_{b}^{m}(\mathbb R)
 \subset\textrm{Lip}_{b}^{m}(\mathbb R).
 \end{displaymath}
 \item Consider $n\in\mathbb N^*$. The vector space of infinitely continuously differentiable maps $f :\mathbb R^n\rightarrow\mathbb R$ such that $f$ and all its partial derivatives have polynomial growth is denoted by $C_{p}^{\infty}(\mathbb R^n,\mathbb R)$.
\end{enumerate}
%


%
\section{Stochastic integrals with respect to the fractional Brownian motion and an ergodic theorem for fractional SDE}
On the one hand, this section presents two different methods to define a stochastic integral with respect to the fractional Brownian motion. The first one is based on the pathwise properties of the fractional Brownian motion. Even if this approach is very natural, it is proved in Proposition \ref{pathwise_NW} that the pathwise stochastic integral is not appropriate to get a consistent estimator of the drift function $b$ in Equation (\ref{main_equation}). Another stochastic integral with respect to the fractional Brownian motion is defined via the Malliavin divergence operator. This stochastic integral is called Skorokhod's integral with respect to $B$. If $H = 1/2$, which means that $B$ is a Brownian motion, the Skorokhod integral defined via the divergence operator coincides with It\^o's integral on its domain. This integral is appropriate for the estimation of the drift function $b$ in Equation (\ref{main_equation}).  On the other hand, an ergodic theorem for the solution of Equation (\ref{main_equation}) is stated in Subsection 2.3.
%


%
\subsection{The pathwise stochastic integral}
This subsection deals with some definitions and basic properties of the pathwise stochastic integral with respect to the fractional Brownian motion of Hurst index greater than $1/2$.
%


%
\begin{definition}\label{Riemann_sum}
Consider $x$ and $w$ two continuous functions from $[0,T]$ into $\mathbb R$. Consider a partition $D := (t_k)_{k\in\llbracket 0,m\rrbracket}$ of $[s,t]$ with $m\in\mathbb N^*$ and $s,t\in [0,T]$ such that $s < t$. The Riemann sum of $x$ with respect to $w$ on $[s,t]$ for the partition $D$ is
\begin{displaymath}
J_{x,w,D}(s,t) :=
\sum_{k = 0}^{m - 1}x(t_k)(w(t_{k + 1}) - w(t_k)).
\end{displaymath}
\end{definition}
\noindent
\textbf{Notation.} With the notations of Definition \ref{Riemann_sum}, the mesh of the partition $D$ is
\begin{displaymath}
\delta(D) :=
\max_{k\in\llbracket 0,m - 1\rrbracket}
|t_{k + 1} - t_k|.
\end{displaymath}
The following theorem ensures the existence and the uniqueness of Young's integral (see Friz and Victoir \cite{FV10}, Theorem 6.8).
%


%
\begin{theorem}\label{Young_integral}
Let $x$ (resp. $w$) be a $\alpha$-H\"older (resp. $\beta$-H\"older) continuous map from $[0,T]$ into $\mathbb R$ with $\alpha,\beta\in ]0,1]$ such that $\alpha +\beta > 1$. There exists a unique continuous map $J_{x,w} : [0,T]\rightarrow\mathbb R$ such that for every $s,t\in [0,T]$ satisfying $s < t$ and any sequence $(D_n)_{n\in\mathbb N}$ of partitions of $[s,t]$ such that $\delta(D_n)\rightarrow 0$ as $n\rightarrow\infty$,
\begin{displaymath}
\lim_{n\rightarrow\infty}
|J_{x,w}(t) - J_{x,w}(s) - J_{x,w,D_n}(s,t)| = 0.
\end{displaymath}
The map $J_{x,w}$ is the Young integral of $x$ with respect to $w$ and $J_{x,w}(t) - J_{x,w}(s)$ is denoted by
\begin{displaymath}
\int_{s}^{t}x(u)dw(u)
\end{displaymath}
for every $s,t\in [0,T]$ such that $s < t$.
\end{theorem}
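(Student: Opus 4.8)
The plan is to run the classical Young estimate: one shows that the Riemann sums $J_{x,w,D}(s,t)$ of Definition \ref{Riemann_sum} form a Cauchy net as $\delta(D)\to 0$, identifies the common limit $I(s,t)$, and then checks that $t\mapsto I(0,t)$ has the required properties. Throughout, fix $s<t$ in $[0,T]$ and write $\|x\|_\alpha$ (resp. $\|w\|_\beta$) for the $\alpha$-H\"older (resp. $\beta$-H\"older) seminorm of $x$ (resp. $w$) on $[0,T]$; note these dominate the corresponding seminorms on every subinterval.

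\emph{Step 1 (cost of deleting one point).} Let $D=(t_k)_{k\in\llbracket 0,m\rrbracket}$ be a partition of $[s,t]$ with $m\ge 2$, and for $0<j<m$ set $D':=D\setminus\{t_j\}$. Rearranging the two sums gives
\[
J_{x,w,D}(s,t)-J_{x,w,D'}(s,t)=(x(t_j)-x(t_{j-1}))(w(t_{j+1})-w(t_j)),
\]
hence $|J_{x,w,D}(s,t)-J_{x,w,D'}(s,t)|\le\|x\|_\alpha\|w\|_\beta\,|t_{j+1}-t_{j-1}|^{\alpha+\beta}$.

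\emph{Step 2 (the Young estimate).} Since $\sum_{j=1}^{m-1}|t_{j+1}-t_{j-1}|\le 2(t-s)$, a pigeonhole argument yields $j\in\llbracket 1,m-1\rrbracket$ with $|t_{j+1}-t_{j-1}|\le 2(t-s)/(m-1)$; deleting such a point, iterating down to the trivial partition $\{s,t\}$, and summing the bounds of Step 1 gives, for every partition $D$ of $[s,t]$,
\[
\bigl|J_{x,w,D}(s,t)-x(s)(w(t)-w(s))\bigr|\le\|x\|_\alpha\|w\|_\beta\,(2(t-s))^{\alpha+\beta}\sum_{k\ge 1}k^{-(\alpha+\beta)}=:C_{\alpha,\beta}\|x\|_\alpha\|w\|_\beta\,(t-s)^{\alpha+\beta},
\]
where the series converges \emph{precisely because} $\alpha+\beta>1$; this step is the heart of the argument and the place where the hypothesis $\alpha+\beta>1$ is essential. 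Applying this bound on each subinterval of $D$ when passing to a common refinement $\widehat D$, one obtains $|J_{x,w,\widehat D}(s,t)-J_{x,w,D}(s,t)|\le C_{\alpha,\beta}\|x\|_\alpha\|w\|_\beta\,\delta(D)^{\alpha+\beta-1}(t-s)$, and therefore, for any two partitions $D,D'$ with $\delta(D)\vee\delta(D')\le\delta$ (compare both to $D\vee D'$),
\[
|J_{x,w,D}(s,t)-J_{x,w,D'}(s,t)|\le 2C_{\alpha,\beta}\|x\|_\alpha\|w\|_\beta\,\delta^{\alpha+\beta-1}(t-s)\xrightarrow[\delta\to 0]{}0.
\]
Hence for every sequence $(D_n)$ of partitions of $[s,t]$ with $\delta(D_n)\to 0$, the sequence $(J_{x,w,D_n}(s,t))_n$ is Cauchy, with a limit $I(s,t)$ that does not depend on the chosen sequence.

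\emph{Step 3 (conclusion).} Concatenating partitions of $[s,u]$ and $[u,v]$ shows $I(s,v)=I(s,u)+I(u,v)$ for $s<u<v$. Passing to the limit in Step 2 yields $|I(s,t)-x(s)(w(t)-w(s))|\le C_{\alpha,\beta}\|x\|_\alpha\|w\|_\beta(t-s)^{\alpha+\beta}$, so $I(s,t)\to 0$ as $t\downarrow s$ (using $\beta$-H\"older continuity of $w$); combined with additivity this makes $I$ jointly continuous. Setting $J_{x,w}(t):=I(0,t)$, with $J_{x,w}(0):=0$, defines a continuous map on $[0,T]$ with $J_{x,w}(t)-J_{x,w}(s)=I(s,t)=\lim_n J_{x,w,D_n}(s,t)$, which is the asserted property. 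For uniqueness, any other continuous $\widetilde J$ with the same limiting property satisfies $\widetilde J(t)-\widetilde J(s)=I(s,t)=J_{x,w}(t)-J_{x,w}(s)$ for all $s<t$, so $\widetilde J-J_{x,w}$ is constant and, in particular, the increments $J_{x,w}(t)-J_{x,w}(s)=\int_s^t x(u)\,dw(u)$ are uniquely determined. The only delicate point is Step 2; everything else (additivity, continuity, uniqueness) is routine bookkeeping.
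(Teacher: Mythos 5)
Your argument is correct and essentially complete. Note, however, that the paper itself gives no proof of this statement: it is quoted from Friz and Victoir \cite{FV10}, Theorem 6.8, so there is nothing internal to compare against. What you have written is the classical Young argument behind that cited result: the one-point deletion estimate, the pigeonhole selection of a point whose deletion costs at most a term of a convergent series $\sum_k k^{-(\alpha+\beta)}$ (this is exactly where $\alpha+\beta>1$ enters), the resulting Young--Lo\`eve bound $|J_{x,w,D}(s,t)-x(s)(w(t)-w(s))|\leqslant C_{\alpha,\beta}\|x\|_\alpha\|w\|_\beta (t-s)^{\alpha+\beta}$, and the refinement comparison giving the Cauchy property with rate $\delta^{\alpha+\beta-1}$; Friz--Victoir obtain the same estimate (their proof, and the modern sewing-lemma variant, are reorganizations of this computation), so your route is the standard one rather than a genuinely new decomposition, but it has the merit of being self-contained where the paper only cites. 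The single point worth flagging is the uniqueness claim: as stated, the theorem only constrains increments, so $J_{x,w}$ is unique only up to an additive constant; you handle this correctly by normalizing $J_{x,w}(0)=0$ and observing that the increments $\int_s^t x(u)\,dw(u)$ are uniquely determined, which is the content actually used in the paper. All intermediate steps (the telescoping identity in Step 1, the bound $\sum_j|t_{j+1}-t_{j-1}|\leqslant 2(t-s)$, additivity by concatenation, and continuity of $t\mapsto I(0,t)$ from the Young--Lo\`eve bound plus continuity of $w$) check out.
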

\noindent
The following proposition is a change of variable for Young's integral.
%


%
\begin{proposition}\label{change_of_variable}
Let $x$ be a $\alpha$-H\"older continuous map from $[0,T]$ into $\mathbb R$ with $\alpha\in ]1/2,1[$. For every $\varphi\in\normalfont{\textrm{Lip}}_{b}^{1}(\mathbb R)$ and $s,t\in [0,T]$ such that $s < t$,
\begin{displaymath}
\varphi(x(t)) -\varphi(x(s)) =
\int_{s}^{t}\varphi'(x(u))dx(u).
\end{displaymath}
\end{proposition}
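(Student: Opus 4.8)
The plan is to prove the change-of-variable formula for the Young integral by the standard telescoping-plus-Taylor argument, reducing it to the defining property of $J_{\varphi'\circ x,x}$ stated in Theorem \ref{Young_integral}. First I would fix $s<t$ in $[0,T]$ and, for each $n\in\mathbb N^*$, take the uniform partition $D_n=(t_k)_{k\in\llbracket 0,2^n\rrbracket}$ of $[s,t]$, so that $\delta(D_n)\to 0$. The left-hand side telescopes exactly:
\begin{displaymath}
\varphi(x(t))-\varphi(x(s))=\sum_{k=0}^{2^n-1}\bigl(\varphi(x(t_{k+1}))-\varphi(x(t_k))\bigr).
\end{displaymath}
The idea is then to compare this sum, term by term, with the Riemann sum $J_{\varphi'\circ x,x,D_n}(s,t)=\sum_{k}\varphi'(x(t_k))\,(x(t_{k+1})-x(t_k))$ and show the difference tends to $0$; since $x$ is $\alpha$-H\"older with $\alpha\in\,]1/2,1[$ and $\varphi'$ is $\alpha$-H\"older on the (compact) range of $x$ (because $\varphi'$ is Lipschitz, as $\varphi\in\textrm{Lip}_b^1(\mathbb R)$ gives $\|\varphi''\|_\infty<\infty$, hence $\varphi'$ is globally Lipschitz — wait, more simply $\varphi''$ bounded makes $\varphi'$ Lipschitz), the pair $(\varphi'\circ x, x)$ satisfies the hypotheses of Theorem \ref{Young_integral} with the same $\alpha$ and $\alpha+\alpha>1$, so $J_{\varphi'\circ x,x}$ exists and its increment is by definition the limit of these Riemann sums.

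For the term-by-term estimate I would use the mean value theorem (or a first-order Taylor expansion): for each $k$ there is $\xi_k$ between $x(t_k)$ and $x(t_{k+1})$ with
\begin{displaymath}
\varphi(x(t_{k+1}))-\varphi(x(t_k))=\varphi'(\xi_k)\bigl(x(t_{k+1})-x(t_k)\bigr),
\end{displaymath}
whence
\begin{displaymath}
\Bigl|\sum_{k=0}^{2^n-1}\bigl(\varphi(x(t_{k+1}))-\varphi(x(t_k))\bigr)-J_{\varphi'\circ x,x,D_n}(s,t)\Bigr|\le\sum_{k=0}^{2^n-1}|\varphi'(\xi_k)-\varphi'(x(t_k))|\,|x(t_{k+1})-x(t_k)|.
\end{displaymath}
Using $|\varphi'(\xi_k)-\varphi'(x(t_k))|\le\|\varphi''\|_\infty|x(t_{k+1})-x(t_k)|$ and the $\alpha$-H\"older bound $|x(t_{k+1})-x(t_k)|\le\|x\|_{\alpha\text{-H\"ol};[0,T]}\,|t_{k+1}-t_k|^\alpha$, the right-hand side is bounded by
\begin{displaymath}
\|\varphi''\|_\infty\,\|x\|_{\alpha\text{-H\"ol}}^2\sum_{k=0}^{2^n-1}|t_{k+1}-t_k|^{2\alpha}\le\|\varphi''\|_\infty\,\|x\|_{\alpha\text{-H\"ol}}^2\,(t-s)\,\delta(D_n)^{2\alpha-1},
\end{displaymath}
which converges to $0$ as $n\to\infty$ since $2\alpha-1>0$. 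Therefore the telescoped left-hand side and the Riemann sums have the same limit, namely $J_{\varphi'\circ x,x}(t)-J_{\varphi'\circ x,x}(s)=\int_s^t\varphi'(x(u))\,dx(u)$.

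Two small points need care. First, one must check that $\varphi'\circ x$ is H\"older continuous so Theorem \ref{Young_integral} applies to the pair $(\varphi'\circ x, x)$: since $\varphi'$ is Lipschitz (its derivative $\varphi''$ is bounded) and $x$ is $\alpha$-H\"older, the composition $\varphi'\circ x$ is $\alpha$-H\"older, and $\alpha+\alpha>1$ because $\alpha>1/2$ — so the Young integral on the right is well defined. Second, the left-hand side $\varphi(x(t))-\varphi(x(s))$ does not depend on the partition, so once we know the Riemann sums converge (to the Young integral, by Theorem \ref{Young_integral}) and that their distance to the telescoping sum goes to $0$, the identity follows for this particular sequence $(D_n)$; by the uniqueness clause of Theorem \ref{Young_integral} the limit is the same along any sequence of partitions with vanishing mesh, so no generality is lost. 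I do not expect any serious obstacle here: the only quantitative input is the elementary bound $\sum_k|t_{k+1}-t_k|^{2\alpha}\le(t-s)\,\delta(D_n)^{2\alpha-1}$ together with $2\alpha>1$, which is exactly the Young regime, and everything else is the mean value theorem and the definition of the Young integral.
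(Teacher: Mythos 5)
Your overall strategy (telescoping, mean value theorem, comparison with the Riemann sums of Theorem \ref{Young_integral}, and the Young-regime bound $\sum_k|t_{k+1}-t_k|^{2\alpha}\leqslant (t-s)\,\delta(D_n)^{2\alpha-1}$) is the standard argument, and the paper itself gives no proof of this proposition, treating it as a known change-of-variable fact for Young integrals. However, your proof has a genuine gap at the one place where regularity of $\varphi'$ is invoked: you assert that $\varphi\in\textrm{Lip}_{b}^{1}(\mathbb R)$ gives $\|\varphi''\|_{\infty}<\infty$, hence $\varphi'$ Lipschitz. By the paper's definition, $\textrm{Lip}_{b}^{1}(\mathbb R)$ only requires $\varphi\in C^1(\mathbb R)$, $\varphi$ Lipschitz and $\|\varphi'\|_{\infty}<\infty$; no second derivative exists in general, and $\varphi'$ need not be Lipschitz nor H\"older. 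Consequently both key steps fail as written: the bound $|\varphi'(\xi_k)-\varphi'(x(t_k))|\leqslant\|\varphi''\|_{\infty}|x(t_{k+1})-x(t_k)|$ is unjustified, and so is the claim that $\varphi'\circ x$ is $\alpha$-H\"older, which you need even to invoke Theorem \ref{Young_integral} and make sense of the right-hand side. With $\varphi'$ merely continuous, your error term is only controlled by $\sum_k\omega(|x(t_{k+1})-x(t_k)|)\,|x(t_{k+1})-x(t_k)|\lesssim\omega\bigl(C\delta(D_n)^{\alpha}\bigr)\,\delta(D_n)^{\alpha-1}(t-s)$, where $\omega$ is the modulus of continuity of $\varphi'$ on the range of $x$, and this need not vanish since $\alpha-1<0$; one needs $\varphi'$ H\"older of order $\gamma>\frac{1-\alpha}{\alpha}$ (for instance $\varphi'$ Lipschitz) for the argument to close.

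So either strengthen the hypothesis you actually use (your proof is correct verbatim for $\varphi\in\textrm{Lip}_{b}^{2}(\mathbb R)$, or more generally whenever $\varphi'$ is H\"older of order $\gamma$ with $\alpha(1+\gamma)>1$, replacing $\|\varphi''\|_{\infty}$ by the H\"older constant of $\varphi'$ and $\delta(D_n)^{2\alpha-1}$ by $\delta(D_n)^{\alpha(1+\gamma)-1}$), or supply a separate argument for the general $\textrm{Lip}_{b}^{1}$ case, which your present estimates do not cover. It is worth noting that in the only place the paper uses this proposition (the proof of Proposition \ref{pathwise_NW}), one has $\varphi=\mathcal K$ with $\varphi'=K\in C_{b}^{1}(\mathbb R)$, so $\varphi'$ is Lipschitz and your argument does apply there with $\|K'\|_{\infty}$ in place of $\|\varphi''\|_{\infty}$; but as a proof of the proposition as stated, the regularity claim you rely on is a misreading of the definition of $\textrm{Lip}_{b}^{1}(\mathbb R)$.
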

\noindent
For any $\alpha\in ]1/2,H[$, the paths of $B$ are $\alpha$-H\"older continuous (see Nualart \cite{NUALART06}, Section 5.1). So, for every process $Y := (Y(t))_{t\in [0,T]}$ with $\beta$-H\"older continuous paths from $[0,T]$ into $\mathbb R$ such that $\alpha +\beta > 1$, by Theorem \ref{Young_integral}, it is natural to define the pathwise stochastic integral of $Y$ with respect to $B$ by
\begin{displaymath}
\left(\int_{0}^{t}Y(s)dB(s)\right)(\omega) :=
\int_{0}^{t}Y(\omega,s)dB(\omega,s)
\end{displaymath}
for every $\omega\in\Omega$ and $t\in [0,T]$.
%


%
\subsection{The Skorokhod integral}
This subsection deals with some definitions and results on Malliavin calculus in order to define and to provide a suitable expression of Skorokhod's integral.
\\
\\
Consider the vector space
\begin{displaymath}
\mathcal H :=
\left\{\varphi :\mathbb R_+\rightarrow\mathbb R :
\int_{0}^{\infty}
\int_{0}^{\infty}
|t - s|^{2H - 2}|\varphi(s)|\cdot|\varphi(t)|dsdt <\infty\right\}.
\end{displaymath}
Equipped with the scalar product
\begin{displaymath}
\langle\varphi,\psi\rangle_{\mathcal H} :=
H(2H - 1)\int_{0}^{\infty}\int_{0}^{\infty}
|t - s|^{2H - 2}\varphi(s)\psi(t)dsdt
\textrm{ ; }
\varphi,\psi\in\mathcal H,
\end{displaymath}
$\mathcal H$ is the reproducing kernel Hilbert space of $B$. Let $\mathbf B$ be the map defined on $\mathcal H$ by
\begin{displaymath}
\mathbf B(h) :=
\int_{0}^{.}h(s)dB(s)
\textrm{ $;$ } h\in\mathcal H
\end{displaymath}
which is the Wiener integral of $h$ with respect to $B$. The family $(\mathbf B(h))_{h\in\mathcal H}$ is an isonormal Gaussian process.
%


%
\begin{definition}\label{Malliavin_derivative}
The Malliavin derivative of a smooth functional
\begin{displaymath}
F = f(
\mathbf B(h_1),\dots,
\mathbf B(h_n))
\end{displaymath}
where $n\in\mathbb N^*$, $f\in C_{p}^{\infty}(\mathbb R^n,\mathbb R)$ and $h_1,\dots,h_n\in\mathcal H$ is the $\mathcal H$-valued random variable
\begin{displaymath}
\mathbf DF :=
\sum_{k = 1}^{n}
\partial_k f
(
\mathbf B(h_1),\dots,
\mathbf B(h_n))h_k.
\end{displaymath}
\end{definition}
%


%
\begin{proposition}\label{Malliavin_derivative_domain}
The map $\mathbf D$ is closable from $L^2(\Omega,\mathcal A,\mathbb P)$ into $L^2(\Omega;\mathcal H)$. Its domain in $L^2(\Omega,\mathcal A,\mathbb P)$ is denoted by $\mathbb D^{1,2}$ and is the closure of the smooth functionals space for the norm $\|.\|_{1,2}$ defined by
\begin{displaymath}
\|F\|_{1,2}^{2} :=
\mathbb E(|F|^2) +
\mathbb E(\|\mathbf DF\|_{\mathcal H}^{2}) < \infty
\end{displaymath}
for every $F\in L^2(\Omega,\mathcal A,\mathbb P)$.
\end{proposition}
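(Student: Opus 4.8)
The plan is to recall the standard machinery: closability of an unbounded operator is equivalent to showing that whenever a sequence of smooth functionals converges to $0$ in $L^2(\Omega,\mathcal A,\mathbb P)$ and the sequence of their Malliavin derivatives converges in $L^2(\Omega;\mathcal H)$, the limit of the derivatives must be $0$. So first I would fix a sequence $(F_n)_{n\in\mathbb N}$ of smooth functionals with $F_n\to 0$ in $L^2(\Omega,\mathcal A,\mathbb P)$ and $\mathbf DF_n\to\eta$ in $L^2(\Omega;\mathcal H)$ for some $\eta\in L^2(\Omega;\mathcal H)$, and the goal becomes $\eta=0$.

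The key step is an integration-by-parts formula on the isonormal Gaussian space $(\mathbf B(h))_{h\in\mathcal H}$: for any smooth functional $F$ and any $h\in\mathcal H$,
\begin{displaymath}
\mathbb E(\langle\mathbf DF,h\rangle_{\mathcal H})
=\mathbb E(F\,\mathbf B(h)).
\end{displaymath}
This follows from the finite-dimensional Gaussian integration by parts once $F=f(\mathbf B(h_1),\dots,\mathbf B(h_n))$ is written in terms of an orthonormal family extracted from $h_1,\dots,h_n,h$, applied coordinatewise. Next I would upgrade this to the bilinear identity $\mathbb E(\langle\mathbf DF,h\rangle_{\mathcal H}G)=\mathbb E(-F\langle\mathbf DG,h\rangle_{\mathcal H}+FG\,\mathbf B(h))$ for two smooth functionals $F,G$, by applying the previous formula to the product $FG$, which is again a smooth functional since $C_p^\infty$ is stable under multiplication and the chain rule gives $\mathbf D(FG)=G\,\mathbf DF+F\,\mathbf DG$.

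With this in hand, the closability argument is routine: for smooth $G$ and $h\in\mathcal H$, apply the bilinear identity with $F_n$ in place of $F$, obtaining
\begin{displaymath}
\mathbb E(\langle\mathbf DF_n,h\rangle_{\mathcal H}G)
=\mathbb E(-F_n\langle\mathbf DG,h\rangle_{\mathcal H}+F_nG\,\mathbf B(h)),
\end{displaymath}
and let $n\to\infty$: the left-hand side tends to $\mathbb E(\langle\eta,h\rangle_{\mathcal H}G)$ by the $L^2(\Omega;\mathcal H)$ convergence of $\mathbf DF_n$ (Cauchy–Schwarz in $L^2(\Omega;\mathcal H)$, noting $\langle\cdot,h\rangle_{\mathcal H}G\in L^2$ because $G$ is smooth hence in every $L^p$), while the right-hand side tends to $0$ because $F_n\to 0$ in $L^2$ and both $\langle\mathbf DG,h\rangle_{\mathcal H}$ and $G\,\mathbf B(h)$ lie in $L^2$. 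Hence $\mathbb E(\langle\eta,h\rangle_{\mathcal H}G)=0$ for all smooth $G$ and all $h\in\mathcal H$; since smooth functionals are dense in $L^2(\Omega,\mathcal A,\mathbb P)$ this forces $\langle\eta,h\rangle_{\mathcal H}=0$ a.s.\ for each $h$, and running $h$ through a countable dense subset of $\mathcal H$ yields $\eta=0$ a.s., proving closability. The statement that $\mathbb D^{1,2}$ is the closure of the smooth functionals for $\|\cdot\|_{1,2}$ is then just the definition of the domain of the closure of a closable operator, and one checks $\|\cdot\|_{1,2}$ is the graph norm.

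The main obstacle is the integration-by-parts formula, specifically handling the possible linear dependence among $h_1,\dots,h_n,h$: one must pass to an orthonormal basis of the finite-dimensional subspace they span, rewrite $F$ and $\mathbf B(h)$ in those coordinates, reduce to the scalar identity $\mathbb E(g'(N))=\mathbb E(Ng(N))$ for $N$ standard Gaussian and $g$ of polynomial growth, and verify the polynomial-growth hypotheses guarantee all the expectations are finite so that differentiation under the Gaussian integral is legitimate. Everything after that is soft functional analysis.
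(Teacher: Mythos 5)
Your argument is correct and is precisely the standard closability proof (Gaussian integration by parts on the isonormal process, the product rule for smooth functionals, passage to the limit, and density of smooth functionals) that the paper does not reproduce but delegates to Nualart \cite{NUALART06}, Proposition 1.2.1. So you have supplied, correctly, the very proof the paper cites; no substantive difference in approach.
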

\noindent
For a proof, see Nualart \cite{NUALART06}, Proposition 1.2.1.
%


%
\begin{definition}\label{divergence_operator}
The adjoint $\delta$ of the Malliavin derivative $\mathbf D$ is the divergence operator. The domain of $\delta$ is denoted by $\normalfont{\textrm{dom}}(\delta)$ and $u\in\normalfont{\textrm{dom}}(\delta)$ if and only if there exists a deterministic constant $c > 0$ such that for every $F\in\mathbb D^{1,2}$,
\begin{displaymath}
|\mathbb E(\langle\mathbf DF,u\rangle_{\mathcal H})|
\leqslant
c\mathbb E(|F|^2)^{1/2}.
\end{displaymath}
\end{definition}
\noindent
For every process $Y := (Y(s))_{s\in\mathbb R_+}$ and every $t > 0$, if $Y\mathbf 1_{[0,t]}\in\textrm{dom}(\delta)$, then its Skorokhod integral with respect to $B$ is defined on $[0,t]$ by
\begin{displaymath}
\int_{0}^{t}Y(s)\delta B(s) :=
\delta(Y\mathbf 1_{[0,t]}).
\end{displaymath}

With the same notations:
\begin{displaymath}
\int_{0}^{t}
Y(s)\delta X(s) :=
\int_{0}^{t}Y(s)b(X(s))ds +
\sigma\int_{0}^{t}Y(s)\delta B(s).
\end{displaymath}
The following proposition provides the link between the Skorokhod integral and the pathwise stochastic integral of Subsection 2.1.
%


%
\begin{proposition}\label{expression_Skorokhod}
If $b\in\normalfont{\textrm{Lip}}_{b}^{1}(\mathbb R)$, then Equation (\ref{main_equation}) with initial condition $x\in\mathbb R$ has a unique solution $X_x$ with $\alpha$-H\"older continuous paths for every $\alpha\in ]0,H[$. Moreover, for every $\varphi\in\normalfont{\textrm{Lip}}_{b}^{1}(\mathbb R)$,
\begin{eqnarray}
 \label{decomposition_Skorokhod}
 \int_{0}^{t}\varphi(X_x(u))\delta X_x(u) & = &
 \int_{0}^{t}\varphi(X_x(u))dX_x(u)\\
 & &
 -\alpha_H\sigma^2
 \int_{0}^{t}
 \int_{0}^{u}\varphi'(X_x(u))\frac{\partial_x X_x(u)}{\partial_x X_x(v)}|u - v|^{2H - 2}dvdu,
 \nonumber
\end{eqnarray}
where $\alpha_H = H(2H - 1)$.
\end{proposition}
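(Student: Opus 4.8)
The plan is to derive \eqref{decomposition_Skorokhod} from the well-known relation between the Skorokhod and pathwise (Young/Stratonovich-type) integrals for Gaussian processes, applied to the process $Y(u) = \varphi(X_x(u))$. The existence and regularity of $X_x$ follows from standard Young-integral fixed-point arguments (Friz--Victoir \cite{FV10}), so I would state it and move on. The substance is the identity. First I recall the general formula: for a sufficiently regular process $Y$ in the domain of $\delta$ whose Malliavin derivative is a process $\mathbf D_r Y(u)$, one has
\begin{displaymath}
\int_0^t Y(u)\,dB(u) = \int_0^t Y(u)\,\delta B(u) + \alpha_H\int_0^t\int_0^t |u-r|^{2H-2}\,\mathbf D_r Y(u)\,dr\,du,
\end{displaymath}
where the left-hand side is the pathwise/Young integral and $\alpha_H = H(2H-1)$; this is the fractional analogue of the It\^o--Stratonovich correction and can be found in Nualart \cite{NUALART06}. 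Multiplying Equation \eqref{main_equation} appropriately, the same correction carries over from $\delta B$ to $\delta X_x$, since the drift part $\int_0^t Y(u)b(X_x(u))\,du$ is the same in both the Skorokhod and pathwise decompositions of $\int_0^t Y(u)\,\delta X_x(u)$ by definition.

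Second, I would compute $\mathbf D_r Y(u) = \varphi'(X_x(u))\,\mathbf D_r X_x(u)$ by the chain rule for the Malliavin derivative (Definition \ref{Malliavin_derivative} and its closure). The key is to identify $\mathbf D_r X_x(u)$. Differentiating Equation \eqref{main_equation} in the Malliavin sense, for fixed $r$ the process $u\mapsto \mathbf D_r X_x(u)$ solves the linear equation
\begin{displaymath}
\mathbf D_r X_x(u) = \sigma\mathbf 1_{[0,u]}(r) + \int_r^u b'(X_x(s))\,\mathbf D_r X_x(s)\,ds,
\end{displaymath}
whose solution is $\mathbf D_r X_x(u) = \sigma\mathbf 1_{\{r\le u\}}\exp\!\big(\int_r^u b'(X_x(s))\,ds\big)$. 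On the other hand, the Jacobian $\partial_x X_x(u)$ satisfies the same linear ODE, $\partial_x X_x(u) = 1 + \int_0^u b'(X_x(s))\partial_x X_x(s)\,ds$, giving $\partial_x X_x(u) = \exp\!\big(\int_0^u b'(X_x(s))\,ds\big)$. Hence for $r \le u$,
\begin{displaymath}
\mathbf D_r X_x(u) = \sigma\,\frac{\partial_x X_x(u)}{\partial_x X_x(r)},
\end{displaymath}
and substituting $\mathbf D_r Y(u) = \sigma\varphi'(X_x(u))\,\partial_x X_x(u)/\partial_x X_x(r)$ into the correction term, together with $|u-r|^{2H-2}$ and the restriction $r\le u$ (which halves the double integral over $[0,t]^2$ to the iterated integral $\int_0^t\int_0^u$), yields exactly the stated term $-\alpha_H\sigma^2\int_0^t\int_0^u \varphi'(X_x(u))\frac{\partial_x X_x(u)}{\partial_x X_x(v)}|u-v|^{2H-2}\,dv\,du$. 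The overall sign comes from moving the correction to the right-hand side.

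The main obstacle is the justification of the manipulations rather than their formal content: one must check that $Y\mathbf 1_{[0,t]}\in\mathrm{dom}(\delta)$, that $Y(u)=\varphi(X_x(u))$ is Malliavin differentiable with the claimed derivative (this requires knowing $X_x(u)\in\mathbb D^{1,2}$, or better $\mathbb D^{1,p}$ for suitable $p$, with the stated Malliavin derivative — itself a small argument via differentiating the Young-equation fixed point), and that the pathwise integral $\int_0^t\varphi(X_x(u))\,dX_x(u)$ agrees with the object appearing in the Skorokhod--pathwise conversion formula. Since $\varphi\in\mathrm{Lip}_b^1(\mathbb R)$ and $b\in\mathrm{Lip}_b^1(\mathbb R)$, all the quantities $\varphi'(X_x)$, $\partial_x X_x$ and $1/\partial_x X_x$ are bounded below and above pathwise and have moments of all orders, so the integrability needed to invoke the conversion formula holds; the double integral converges because $2H-2\in(-1,0)$. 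I would present these verifications compactly, citing Nualart \cite{NUALART06} for the Malliavin-calculus facts and the conversion formula, and relegating the ODE identifications of $\mathbf D_r X_x$ and $\partial_x X_x$ to a couple of lines each.
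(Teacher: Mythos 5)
Your proposal is correct and follows essentially the same route as the paper's proof: both invoke Nualart's pathwise--Skorokhod conversion formula (Proposition 5.2.3 in \cite{NUALART06}), compute $\mathbf D_vX_x(u)$ and $\partial_xX_x(u)$ from the same linear equations to get $\mathbf D_vX_x(u)=\sigma\mathbf 1_{[0,u]}(v)\,\partial_xX_x(u)/\partial_xX_x(v)$, and let the indicator reduce the double integral to the iterated one, producing the $\alpha_H\sigma^2$ correction term. The only cosmetic difference is the reference for existence and H\"older regularity of $X_x$ (you cite Friz--Victoir, the paper cites Lejay \cite{LEJAY10}), which is immaterial.
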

\noindent
Moreover, we can prove the following Corollary, which allows us to propose a computable form for the estimator.
%


%
\begin{corollary}\label{approximation_Skorokhod}
Assume that $b\in\normalfont{\textrm{Lip}}_{b}^{2}(\mathbb R)$ and there exists a constant $M > 0$ such that
\begin{displaymath}
b'(x)\leqslant -M
\textrm{ $;$ }
\forall x\in\mathbb R.
\end{displaymath}
For every $\varphi\in\normalfont{\textrm{Lip}}_{b}^{1}(\mathbb R)$, $x\in\mathbb R$ and $\varepsilon,t > 0$,
\begin{displaymath}
\left|\int_{0}^{t}\varphi(X_x(u))\delta X_x(u) -
S_{\varphi}(x,\varepsilon,t)\right|
\leqslant
C_{\varphi}\varepsilon t^{2H - 1},
\end{displaymath}
where
\begin{eqnarray*}
 S_{\varphi}(x,\varepsilon,t) & := &
 \int_{0}^{t}\varphi(X_x(u))dX_x(u)\\
 & &
 -\alpha_H\sigma^2
 \int_{0}^{t}
 \int_{0}^{u}\varphi'(X_x(u))\frac{X_{x +\varepsilon}(u) - X_x(u)}{X_{x +\varepsilon}(v) - X_x(v)}|u - v|^{2H - 2}dvdu
\end{eqnarray*}
and
\begin{displaymath}
C_{\varphi} :=
H\sigma^2\frac{\|b''\|_{\infty}\|\varphi'\|_{\infty}}{2M^2}.
\end{displaymath}
\end{corollary}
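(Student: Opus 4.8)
The plan is to start from the exact decomposition \eqref{decomposition_Skorokhod} provided by Proposition \ref{expression_Skorokhod}, applied to $\varphi$ and the solution $X_x$. Comparing that formula with the definition of $S_\varphi(x,\varepsilon,t)$, the pathwise integral terms $\int_0^t\varphi(X_x(u))dX_x(u)$ cancel, so the whole difference reduces to controlling
\begin{displaymath}
\left|\int_0^t\int_0^u \varphi'(X_x(u))\left(\frac{\partial_x X_x(u)}{\partial_x X_x(v)} - \frac{X_{x+\varepsilon}(u) - X_x(u)}{X_{x+\varepsilon}(v) - X_x(v)}\right)|u-v|^{2H-2}\,dv\,du\right| .
\end{displaymath}
Pulling $\|\varphi'\|_\infty$ out, it remains to estimate the ratio discrepancy pointwise and then integrate $|u-v|^{2H-2}$ over the simplex $\{0<v<u<t\}$, which contributes a factor of order $t^{2H-1}$ (up to the constant $1/(2H(2H-1))$, or similar, matching $\alpha_H$ against the $H$ appearing in $C_\varphi$).

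The heart of the argument is therefore a Taylor-type expansion of $X_{x+\varepsilon}$ around $X_x$ in the initial condition. First I would write $X_{x+\varepsilon}(r) - X_x(r) = \varepsilon\,\partial_x X_x(r) + R_\varepsilon(r)$ with a remainder $R_\varepsilon(r)$ that is $O(\varepsilon^2)$ uniformly in $r$, or better yet use the mean value form $X_{x+\varepsilon}(r)-X_x(r) = \varepsilon\,\partial_x X_{\xi_r}(r)$ for some $\xi_r\in(x,x+\varepsilon)$. Then the ratio becomes
\begin{displaymath}
\frac{X_{x+\varepsilon}(u)-X_x(u)}{X_{x+\varepsilon}(v)-X_x(v)} = \frac{\partial_x X_{\xi_u}(u)}{\partial_x X_{\xi_v}(v)},
\end{displaymath}
and the difference with $\partial_x X_x(u)/\partial_x X_x(v)$ is controlled by the modulus of continuity in the initial condition of the Jacobian $y\mapsto\partial_x X_y(r)$, which is what forces the hypothesis $b\in\textrm{Lip}_b^2(\mathbb{R})$ (one needs $\partial_x^2 X$, i.e. one derivative in the initial condition of $\partial_x X$, hence two derivatives of $b$). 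This gives a bound of the ratio difference by $C\varepsilon$ times a quantity built from $\sup_r|\partial_x X_y(r)|$, $\sup_r 1/|\partial_x X_y(r)|$ and $\sup_r|\partial_x^2 X_y(r)|$.

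To make those suprema genuinely uniform in time — which is what produces the factor $t^{2H-1}$ rather than something growing with $t$ — I would invoke the dissipativity assumption $b'\le -M$. The first variation process solves the linear equation $\partial_x X_x(r) = 1 + \int_0^r b'(X_x(s))\partial_x X_x(s)\,ds$, whose solution is $\partial_x X_x(r) = \exp(\int_0^r b'(X_x(s))\,ds)$; under $b'\le -M$ this lies in $(0, e^{-Mr}]$ and, crucially, the ratio $\partial_x X_x(u)/\partial_x X_x(v) = \exp(\int_v^u b'(X_x(s))\,ds) \le 1$ for $v<u$, so all the ratios and inverse ratios are bounded by constants independent of $t$. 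The second variation process $\partial_x^2 X$ solves another linear equation driven by $b''$ and $(\partial_x X)^2$, and the same exponential bound gives $\|\partial_x^2 X_y\|_\infty \le \|b''\|_\infty/(2M)$ or similar. Assembling these pieces: the ratio difference is $\le C\varepsilon$ with $C$ involving $\|b''\|_\infty/M^2$, multiplication by $\|\varphi'\|_\infty$ and integration of $|u-v|^{2H-2}$ over the simplex yields exactly the claimed bound $C_\varphi\varepsilon t^{2H-1}$ with $C_\varphi = H\sigma^2\|b''\|_\infty\|\varphi'\|_\infty/(2M^2)$ once the combinatorial constant from $\alpha_H = H(2H-1)$ and the simplex integral $\int_0^t\int_0^u|u-v|^{2H-2}dv\,du = t^{2H}/(2H(2H-1))$ are tracked carefully.

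The main obstacle I anticipate is making the Taylor remainder estimate in the initial condition \emph{uniform in time}: a naive Gronwall argument on the difference $X_{x+\varepsilon}-X_x$ gives constants like $e^{\|b'\|_\infty t}$ which blow up, so one genuinely must exploit the sign of $b'$ throughout — in the equation for $X_{x+\varepsilon}-X_x$, in the first variation, and in the second variation — and keep the pathwise (Young) character of the equation under control so that the fractional driver $\sigma B$ drops out of all these variation equations (it does, since $\sigma B$ does not depend on the initial condition, so every $\partial_x^k X$ for $k\ge1$ solves an equation with no noise term). Tracking the exact numerical constant to land on the stated $C_\varphi$ is then just bookkeeping, but the uniform-in-$t$ dissipative estimates are the real content.
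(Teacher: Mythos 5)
Your opening move is the same as the paper's: apply Proposition \ref{expression_Skorokhod}, cancel the pathwise integrals, pull out $\|\varphi'\|_{\infty}$, and reduce everything to a pointwise estimate of the discrepancy between the two ratios. But there is a genuine gap in how you then reach the stated bound $C_{\varphi}\varepsilon t^{2H-1}$. A bound on the ratio discrepancy that is merely \emph{uniform in time}, of the form $C\varepsilon$, is not enough: as your own computation in the last paragraph shows, $\int_0^t\int_0^u(u-v)^{2H-2}\,dv\,du = t^{2H}/(2H(2H-1))$, so uniform-in-time estimates only deliver $C\varepsilon t^{2H}$, not $\varepsilon t^{2H-1}$ (your earlier claim that the simplex integral ``contributes a factor of order $t^{2H-1}$'' contradicts this and is incorrect). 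What the paper actually proves is a pointwise bound decaying exponentially in the \emph{smaller} time, namely $\bigl|\partial_x X_x(u)/\partial_x X_x(v) - (X_{x+\varepsilon}(u)-X_x(u))/(X_{x+\varepsilon}(v)-X_x(v))\bigr|\leqslant \frac{\|b''\|_{\infty}}{2M}\,\varepsilon\, e^{-Mv}$; the missing factor $t^{2H-1}/M$ (and hence the $M^2$ in $C_{\varphi}$) then comes from $\int_0^t e^{-Mv}(t-v)^{2H-1}dv\leqslant t^{2H-1}/M$.

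Moreover, your specific device for the ratio discrepancy (mean value theorem in the initial condition plus a bound on the second variation) does not produce that decay, and one of its ingredients is false: comparing $\partial_x X_{\xi_u}(u)/\partial_x X_{\xi_v}(v)$ with $\partial_x X_x(u)/\partial_x X_x(v)$ numerator and denominator separately requires $1/\partial_x X_{\cdot}(v)=\exp\bigl(-\int_0^v b'(X_{\cdot}(s))ds\bigr)\geqslant e^{Mv}$, which grows exponentially, so ``all the ratios and inverse ratios are bounded'' fails for these single-time inverses. Even if you repair this by writing your ratio as a single exponential, its exponent differs from $\int_v^u b'(X_x(r))dr$ by a term $\int_0^v\bigl(b'(X_{\xi_u}(s))-b'(X_{\xi_v}(s))\bigr)ds$, which is $O(\varepsilon\|b''\|_{\infty}/M)$ but does not decay in $v$, so again you only get $\varepsilon t^{2H}$. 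The paper's route avoids the MVT altogether: since $Y:=X_{x+\varepsilon}-X_x$ satisfies $Y'(r)=Y(r)\int_0^1 b'(X_x(r)+\theta Y(r))d\theta$, one has $Y(u)/Y(v)=\exp\bigl(\int_v^u\int_0^1 b'(X_x(r)+\theta Y(r))d\theta\,dr\bigr)$; both this exponent and $\int_v^u b'(X_x(r))dr$ are nonpositive by dissipativity, so the difference of exponentials is at most $\frac{\|b''\|_{\infty}}{2}\int_v^u|Y(r)|dr$, and combined with $|Y(r)|\leqslant\varepsilon e^{-Mr}$ (which you do identify) this gives exactly the $\varepsilon e^{-Mv}$ decay needed to land on $C_{\varphi}\varepsilon t^{2H-1}$. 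You need either this representation or some equivalent mechanism producing decay in $v$; without it the stated corollary is not proved.
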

\noindent
As mentioned in the Introduction, the formula for $ S_{\varphi}(x,\varepsilon,t)$  can be used if two paths of $X$ can be observed with different but close initial conditions.\\
Lastly, the following theorem, recently proved by Hu, Nualart and Zhou in \cite{HNZ18} (see Proposition 4.4), provides a suitable control of Skorokhod's integral to study its long-time behavior.
%


%
\begin{theorem}\label{control_divergence_integral}
Assume that $b\in\normalfont{\textrm{Lip}}_{b}^{2}(\mathbb R)$ and there exists a constant $M > 0$ such that
\begin{displaymath}
b'(x)\leqslant -M
\textrm{ $;$ }
\forall x\in\mathbb R.
\end{displaymath}
There exists a deterministic constant $C > 0$, not depending on $T$, such that for every $\varphi\in\normalfont{\textrm{Lip}}_{b}^{1}(\mathbb R)$:
\begin{eqnarray*}
 \mathbb E\left(
 \left|\int_{0}^{T}\varphi(X(s))\delta B(s)\right|^2\right)
 & \leqslant &
 C\left(\left(\int_{0}^{T}\mathbb E(|\varphi(X(s))|^{1/H})ds\right)^{2H}\right.\\
 & &
 +\left.
 \left(\int_{0}^{T}\mathbb E(|\varphi'(X(s))|^2)^{1/(2H)}ds\right)^{2H}\right) <\infty.
\end{eqnarray*}
\end{theorem}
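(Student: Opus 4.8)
We only sketch a possible proof, since this estimate is imported from \cite{HNZ18} (Proposition 4.4). The plan is to reduce the statement to the second-moment identity for the Malliavin divergence operator and then to control the two resulting terms by means of a Hardy--Littlewood--Sobolev type embedding together with the explicit Malliavin derivative of the flow generated by Equation (\ref{main_equation}). First I would check that, for $\varphi\in\textrm{Lip}_{b}^{1}(\mathbb R)$, the process $u:=\varphi(X)\mathbf 1_{[0,T]}$ belongs to $\mathbb D^{1,2}(\mathcal H)$, hence to $\textrm{dom}(\delta)$. Since $b\in\textrm{Lip}_{b}^{2}(\mathbb R)$ and the noise in (\ref{main_equation}) is additive, differentiating the equation gives $\mathbf D_rX(t)=\sigma\exp\big(\int_r^tb'(X(v))dv\big)\mathbf 1_{[0,t]}(r)$, so that $\mathbf D_ru_t=\varphi'(X(t))\mathbf D_rX(t)$ and, thanks to $b'\leqslant -M$,
\begin{displaymath}
|\mathbf D_ru_t|\leqslant|\sigma|\cdot\|\varphi'\|_\infty e^{-M(t-r)}\mathbf 1_{[0,t]}(r).
\end{displaymath}
The dissipativity $b'\leqslant -M$ together with the Gaussian character of the noise also yields that $X(t)$ has moments of every order bounded uniformly in $t\in\mathbb R_+$, which gives the finiteness claimed at the end of the statement.

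Next I would apply the identity
\begin{displaymath}
\mathbb E(\delta(u)^2)=\mathbb E(\|u\|_{\mathcal H}^2)+\mathbb E\big(\langle\mathbf Du,\widetilde{\mathbf{D}u}\rangle_{\mathcal H\otimes\mathcal H}\big),
\end{displaymath}
valid on $\mathbb D^{1,2}(\mathcal H)$, where $\widetilde{\mathbf{D}u}$ denotes $\mathbf Du$ with its two $\mathcal H$-arguments transposed, and bound the two terms separately. For the first term I would write $\mathbb E(\|u\|_{\mathcal H}^2)=\alpha_H\int_0^T\int_0^T\mathbb E(\varphi(X(s))\varphi(X(t)))|t-s|^{2H-2}dsdt$ and estimate it through the continuous embedding $L^{1/H}([0,T])\hookrightarrow\mathcal H$ (the inequality $\|f\|_{\mathcal H}^2\leqslant c_H(\int_0^T|f(s)|^{1/H}ds)^{2H}$, a form of the Hardy--Littlewood--Sobolev inequality), combined with Cauchy--Schwarz and Minkowski's integral inequality in order to move the expectation inside; this produces a bound of the shape $(\int_0^T\mathbb E(|\varphi(X(s))|^{1/H})ds)^{2H}$, i.e.\ the first term of the statement. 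For the second term I would use $|\langle\mathbf Du,\widetilde{\mathbf{D}u}\rangle_{\mathcal H\otimes\mathcal H}|\leqslant\|\mathbf Du\|_{\mathcal H\otimes\mathcal H}^2$, apply the two-parameter analogue of the same embedding ($L^{1/H}([0,T]^2)\hookrightarrow\mathcal H\otimes\mathcal H$), then integrate out the differentiation variable $r$ using $\int_0^te^{-(M/H)(t-r)}dr\leqslant H/M$, which leaves a constant times $(\int_0^T|\varphi'(X(t))|^{1/H}dt)^{2H}$, and finally apply Minkowski's integral inequality to obtain $(\int_0^T\mathbb E(|\varphi'(X(s))|^2)^{1/(2H)}ds)^{2H}$, the second term.

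The feature that makes the constant $C$ independent of $T$ is that every use of $b'\leqslant -M$ produces exponential weights that are integrable on $\mathbb R_+$, so no factor $T^{\kappa}$ ever enters the constants; note that $\int_0^\infty e^{-Mw}w^{2H-2}dw<\infty$ precisely because $H>1/2$, which is where the hypothesis on the Hurst index is used. The main obstacle is to organise the estimate of the second term so that the two singular kernels $|\cdot|^{2H-2}$ and the exponential weights interact without reintroducing a $T$-dependent constant, and so that the moment exponents come out exactly as in the statement rather than in the cruder $L^2$-form produced by a direct Cauchy--Schwarz step; this bookkeeping is the technical heart of the argument and is carried out in \cite{HNZ18}, Proposition 4.4.
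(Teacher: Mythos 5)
The paper offers no proof of this theorem at all: it is imported verbatim from Hu--Nualart--Zhou \cite{HNZ18} (Proposition 4.4), and your sketch follows precisely the strategy of that reference (second-moment formula for the divergence on $\mathbb D^{1,2}(\mathcal H)$, the embedding $L^{1/H}([0,T])\hookrightarrow\mathcal H$ and its two-parameter analogue, the explicit exponentially decaying Malliavin derivative produced by $b'\leqslant -M$, and Minkowski's integral inequality), so your treatment is consistent with the paper's. The one loose point, which you flag yourself, is that Cauchy--Schwarz plus Minkowski only yields the first term in the cruder form $\left(\int_0^T\mathbb E\left(|\varphi(X(s))|^2\right)^{1/(2H)}ds\right)^{2H}$ rather than with the $1/H$-moment inside as stated, so that piece of bookkeeping is genuinely deferred to \cite{HNZ18}; this is harmless for the way the bound is used in the paper (Lemma \ref{control_S}), where only $\|K\|_{\infty}$ and $\|K'\|_{\infty}$ enter.
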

%


%
\subsection{Ergodic theorem for the solution of a fractional SDE}
On the ergodicity of fractional SDEs, the reader can refer to Hairer \cite{HAIRER05}, Hairer and Ohashi \cite{HO07} and Hairer and Pillai \cite{HP10} (see Subsection 4.3 for details).
\\
\\
In the sequel, the map $b$ fulfills the following condition.
%


%
\begin{assumption}\label{ergodicity}
The map $b$ belongs to $\normalfont{\textrm{Lip}}_{b}^{\infty}(\mathbb R)$ and there exists a constant $M > 0$ such that
\begin{equation}\label{dissipativity}
b'(x)\leqslant -M
\textrm{ $;$ }
\forall x\in\mathbb R.
\end{equation}
\end{assumption}
\noindent
\textbf{Remarks:}
\begin{enumerate}
 \item Since $b\in\textrm{Lip}_{b}^{1}(\mathbb R)$, Equation (\ref{main_equation}) has a unique solution.
 \item Under Assumption \ref{ergodicity}, the dissipativity conditions of Hairer \cite{HAIRER05}, Hairer and Ohashi \cite{HO07} and Hu, Nualart and Zhou \cite{HNZ18} are fulfilled by $b$:
 \begin{displaymath}
 (x - y)(b(x) - b(y))
 \leqslant -M(x - y)^2
 \textrm{ $;$ }
 \forall x,y\in\mathbb R
 \end{displaymath}
 and there exists a constant $M' > 0$ such that
 \begin{displaymath}
 xb(x)\leqslant
 M'(1 - x^2)
 \textrm{ $;$ }
 \forall x\in\mathbb R.
 \end{displaymath}
Therefore, Assumption \ref{ergodicity} is sufficient to apply the results proved in \cite{HAIRER05}, \cite{HO07} and \cite{HNZ18} in the sequel.
\end{enumerate}
%


%
\begin{proposition}\label{ergodic_theorem}
Consider a measurable map $\varphi :\mathbb R\rightarrow\mathbb R_+$ such that there exists a nonempty compact subset $C$ of $\mathbb R$ satisfying $\varphi(C)\subset ]0,\infty[$. Under Assumption \ref{ergodicity}, there exists a deterministic constant $l(\varphi) > 0$ such that
\begin{displaymath}
\frac{1}{T}
\int_{0}^{T}
\varphi(X(t))dt
\xrightarrow[T\rightarrow\infty]{\normalfont{\textrm{a.s./L$^2$}}}
l(\varphi) > 0.
\end{displaymath}
\end{proposition}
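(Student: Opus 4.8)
The goal is to establish an ergodic theorem of the form
$\frac1T\int_0^T\varphi(X(t))\,dt\to l(\varphi)$
almost surely and in $L^2$, with a \emph{strictly positive} limit, for any nonnegative measurable $\varphi$ that is bounded away from zero on some compact set. The plan is to reduce to the known ergodic behaviour of the solution of Equation (\ref{main_equation}) established by Hairer and coauthors. Under Assumption \ref{ergodicity}, the drift is dissipative, so by the results in \cite{HAIRER05} (and \cite{HO07}, \cite{HP10}) the fractional SDE admits a unique stationary solution and the associated (non-Markovian) dynamics is ergodic: trajectories forget their initial condition exponentially fast and time averages of suitable observables converge to the expectation under the stationary law $\nu$. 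The first step is therefore to quote precisely the form of the ergodic theorem available in \cite{HAIRER05}/\cite{HO07} — convergence of $\frac1T\int_0^T\psi(X(t))\,dt$ to $\int\psi\,d\nu$ for a class of test functions $\psi$ large enough to include the $\varphi$ of the statement (possibly after a truncation argument, see below), both almost surely and in $L^2$.

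The second step is to identify the limit: set $l(\varphi):=\int_{\mathbb R}\varphi\,d\nu=\mathbb E_\nu(\varphi(X_0))$, where $\nu$ is the stationary distribution. The crux of the statement is that $l(\varphi)>0$. This follows once one knows that $\nu$ has full support on $\mathbb R$ (or at least charges every nonempty open set): then, since $\varphi$ is nonnegative and $\varphi>0$ on a compact set $C$ with nonempty interior — more carefully, since $\varphi(C)\subset\,]0,\infty[$ one can extract a point $x_0\in C$ with $\varphi(x_0)>0$ and, $\varphi$ being merely measurable, pass to a sublevel consideration: there is $\eta>0$ and a set $A\subset C$ of positive Lebesgue measure on which $\varphi\geqslant\eta$ — and $\nu$ being equivalent to (or at least not singular with respect to) Lebesgue measure on $\mathbb R$, we get $l(\varphi)\geqslant\eta\,\nu(A)>0$. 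So I would insert a lemma, or cite the relevant statement in \cite{HAIRER05}, asserting that the stationary measure $\nu$ is absolutely continuous with a strictly positive density; this is standard for such hypoelliptic-type additive-noise equations and is where the positivity of $l(\varphi)$ genuinely comes from.

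The third, more technical, step is to handle the fact that $\varphi$ is only assumed nonnegative and measurable, not bounded or continuous, so it need not lie in the class of test functions for which \cite{HAIRER05} directly gives the ergodic theorem. Here I would truncate: for $N\in\mathbb N^*$ put $\varphi_N:=(\varphi\wedge N)\rho_N$ where $\rho_N$ is a smooth cutoff supported on $[-N,N]$, equal to $1$ on $[-N+1,N-1]$, chosen moreover so that for $N$ large $\varphi_N$ still dominates $\eta\mathbf 1_A$ (possible since $C$, hence $A$, is compact). Each $\varphi_N$ is bounded, so the ergodic theorem applies and gives $\frac1T\int_0^T\varphi_N(X(t))\,dt\to\int\varphi_N\,d\nu$ a.s.\ and in $L^2$; then $\liminf_{T\to\infty}\frac1T\int_0^T\varphi(X(t))\,dt\geqslant\int\varphi_N\,d\nu$ for every $N$, and letting $N\to\infty$ with monotone convergence yields the lower bound $l(\varphi)$. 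For the matching upper bound and for the full $L^2$ convergence one needs an a priori moment control on $\sup_{t}\mathbb E(|\varphi(X(t))|^2)$ or on $\mathbb E(\varphi(X(t))^2)$ uniformly in $t$, which, when $\varphi$ has polynomial growth, follows from the uniform moment bounds on $X(t)$ available under the dissipativity condition (again from \cite{HAIRER05}, or from the Remarks following Assumption \ref{ergodicity}); I would state the convergence first for such $\varphi$ and note that the almost-sure lower bound — which is all that is needed for the denominator of the Nadaraya-Watson estimator to be eventually bounded below — requires only the truncation argument. The main obstacle is precisely this passage from the bounded/smooth test functions handled in the literature to a general nonnegative measurable $\varphi$ while preserving strict positivity of the limit; the positivity of the density of $\nu$ is the key ingredient, and the truncation-plus-monotone-convergence device is the routine-but-necessary bridge.
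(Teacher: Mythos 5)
Your first step---reducing to Hairer's ergodicity results and identifying the limit as the integral of $\varphi$ against the stationary law---is in the same spirit as the paper, which implements it via the enhanced It\^o map: under Assumption \ref{ergodicity} the pair formed by the solution and the past of the driving noise is a Feller Markov process on $\mathbb R\times\Omega_-$ with a unique, hence ergodic, invariant measure $\mu$ (Theorem \ref{ergodicity_modertely_irregular}), and the limit is $l(\varphi)=\mu(\varphi\circ p_{\mathbb R})$ by the ergodic theorem for Markov processes. The genuine gap is in your positivity step. You make $l(\varphi)>0$ rest entirely on an unproved lemma: that the stationary law of $X$ is absolutely continuous with a strictly positive density (your argument needs $\nu(A)>0$ for every Borel $A$ of positive Lebesgue measure, so full support, or ``not singular with respect to Lebesgue'', would not even suffice). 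No such statement is proved or cited, it is not available in \cite{HAIRER05} or \cite{HO07} in the form you need, and the paper deliberately avoids this route: see the remark after Proposition \ref{rate_of_convergence_NW}, where the authors stress that the existence of a stationary density is not ensured. Calling it ``standard for hypoelliptic-type additive-noise equations'' does not settle it here, because $X$ itself is not Markov; the natural object is the invariant measure of the enhanced process on $\mathbb R\times\Omega_-$, and regularity and positivity of its $\mathbb R$-marginal is exactly the kind of fact that would have to be established, not assumed.

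The paper obtains positivity with no density at all: writing $\mu=Q(\tau)\mu$ and invoking the topological irreducibility of the enhanced semigroup (\cite{HO07}, Proposition 5.8, restated as Theorem \ref{ergodicity_modertely_irregular}.(1)), i.e. $Q(\tau;(\bar x,\bar\omega_-),U\times\Omega_-)>0$ for every nonempty open $U\subset\mathbb R$, it gets that $\mu$ charges $\textrm{int}(C)\times\Omega_-$, whence $l(\varphi)=\mu(\varphi\circ p_{\mathbb R})\geqslant\min_{C}\varphi\cdot\mu(\textrm{int}(C)\times\Omega_-)>0$. This only requires the invariant measure to charge nonempty open sets, which invariance plus irreducibility give immediately. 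A secondary flaw in your route: your extraction of a subset $A\subset C$ of positive Lebesgue measure on which $\varphi\geqslant\eta$ fails whenever $C$ is Lebesgue-null (a single point is allowed by the hypothesis), so even granting your density lemma the argument as written does not cover the stated generality; arguing through open sets, as the paper does with $\textrm{int}(C)$ and $\min_C\varphi$ (implicitly taking $C$ with nonempty interior and $\varphi$ bounded below on it, as in the kernel application), is the workable shape. Your truncation and moment discussion for the a.s./$L^2$ convergence of unbounded $\varphi$ is sensible but orthogonal to, and does not repair, the positivity step, which is the heart of the proposition.
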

%


%
\section{Convergence of the Nadaraya-Watson estimator of the drift function}
This section deals with the consistency and rate of convergence of the Nadaraya-Watson estimator of the drift function $b$ in Equation (\ref{main_equation}).
\\
\\
In the sequel, the kernel $K$ fulfills the following assumption.
%


%
\begin{assumption}\label{assumption_kernel}
$\normalfont{\textrm{supp}}(K) = [-1,1]$ and $K\in C_{b}^{1}(\mathbb R,\mathbb R_+)$.
\end{assumption}

\subsection{Why is pathwise integral inadequate}
First of all, let us prove that, even if it seems very natural, the pathwise Nadaraya-Watson estimator
\begin{displaymath}
\widetilde b_{T,h}(x) :=
\frac{\displaystyle{\int_{0}^{T}K\left(\frac{X(s) - x}{h}\right)dX(s)}}
{\displaystyle{\int_{0}^{T}K\left(\frac{X(s) - x}{h}\right)ds}} = \frac{\displaystyle{\frac 1{Th}\int_{0}^{T}K\left(\frac{X(s) - x}{h}\right)dX(s)}}{\widehat f_{T,h}(x)}
\end{displaymath}
where 
\begin{equation}\label{density}
\widehat f_{T,h}(x) :=
\frac{1}{Th}
\int_{0}^{T}
K\left(\frac{X(s) - x}{h}\right)ds.
\end{equation}
is not consistent.  
\\
\\
For this, we need the following lemma providing a convergence result for $\widehat f_{T,h}(x)$. It will also be used to prove Proposition \ref{rate_of_convergence_NW}.
%


%
\begin{lemma}\label{convergence_density}
Under Assumptions \ref{ergodicity} and \ref{assumption_kernel}, there exists a deterministic constant $l_h(x) > 0$ such that
\begin{displaymath}
\widehat f_{T,h}(x)
\xrightarrow[T\rightarrow\infty]{\normalfont{\textrm{a.s./L$^2$}}}
l_h(x) > 0.
\end{displaymath}
\end{lemma}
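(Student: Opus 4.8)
The plan is to read $\widehat f_{T,h}(x)$ as an occupation-type ergodic average and invoke Proposition \ref{ergodic_theorem} directly. First I would rewrite, for the fixed bandwidth $h>0$,
\[
\widehat f_{T,h}(x)=\frac{1}{T}\int_{0}^{T}\varphi_{h,x}(X(s))\,ds
\quad\text{with}\quad
\varphi_{h,x}(y):=\frac{1}{h}K\Big(\frac{y-x}{h}\Big),
\]
so that the assertion reduces to checking that $\varphi_{h,x}$ satisfies the hypotheses of Proposition \ref{ergodic_theorem} and then setting $l_h(x):=l(\varphi_{h,x})$.

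Second, I would verify those hypotheses. Measurability (indeed continuity) of $\varphi_{h,x}\colon\mathbb R\rightarrow\mathbb R_+$ is immediate from Assumption \ref{assumption_kernel}, since $K\in C_{b}^{1}(\mathbb R,\mathbb R_+)$. For the positivity-on-a-compact-set condition, I would use that $\normalfont{\textrm{supp}}(K)=[-1,1]$ is nonempty: there is $y_0\in[-1,1]$ with $K(y_0)>0$, and by continuity of $K$ there exists $\rho>0$ such that $K>K(y_0)/2$ on $[y_0-\rho,y_0+\rho]$. Then $C:=x+h[y_0-\rho,y_0+\rho]$ is a nonempty compact subset of $\mathbb R$ on which $\varphi_{h,x}\geqslant K(y_0)/(2h)>0$, i.e.\ $\varphi_{h,x}(C)\subset\,]0,\infty[$.

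Third, under Assumption \ref{ergodicity}, Proposition \ref{ergodic_theorem} applies to $\varphi_{h,x}$ and yields a deterministic constant $l(\varphi_{h,x})>0$ with $\frac{1}{T}\int_{0}^{T}\varphi_{h,x}(X(t))\,dt\to l(\varphi_{h,x})$ a.s.\ and in $L^2$ as $T\to\infty$; putting $l_h(x):=l(\varphi_{h,x})$ concludes the argument. I do not expect any real obstacle here: the lemma is essentially a specialization of Proposition \ref{ergodic_theorem}. The only step deserving more than a one-line remark is the construction of the compact set $C$ keeping $\varphi_{h,x}$ bounded away from $0$, which relies solely on the continuity of $K$ and on $\normalfont{\textrm{supp}}(K)=[-1,1]$ being nonempty.
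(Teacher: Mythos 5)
Your proposal is correct and follows exactly the paper's route: rewrite $\widehat f_{T,h}(x)$ as the ergodic average of $\varphi_{h,x}(y)=\frac{1}{h}K\left(\frac{y-x}{h}\right)$ and apply Proposition \ref{ergodic_theorem}, setting $l_h(x):=l(\varphi_{h,x})$. The only difference is that you spell out the verification of the positivity-on-a-compact condition (via continuity of $K$ and a point $y_0$ with $K(y_0)>0$), which the paper leaves implicit.
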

%


%
\begin{proof}
Under Assumption \ref{assumption_kernel}, the map
\begin{displaymath}
y\in\mathbb R\longmapsto
\frac{1}{h}K\left(\frac{y - x}{h}\right)
\end{displaymath}
satisfies the condition on $\varphi$ of Proposition \ref{ergodic_theorem}, which applies thus here and gives the result. 
\end{proof}
\noindent
Now, we state the result proving that $\widetilde b_{T,h}(x)$ is not consistent to recover $b(x)$.
%


%
\begin{proposition}\label{pathwise_NW}
Under Assumptions \ref{ergodicity} and \ref{assumption_kernel}:
\begin{displaymath}
\widetilde b_{T,h}(x)
\xrightarrow[T\rightarrow\infty]{\mathbb P} 0.
\end{displaymath}
\end{proposition}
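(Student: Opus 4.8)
The plan is to show that the numerator of $\widetilde b_{T,h}(x)$, namely $\frac{1}{Th}\int_{0}^{T}K((X(s)-x)/h)\,dX(s)$, is bounded by $1/T$ almost surely and hence tends to $0$, while the denominator $\widehat f_{T,h}(x)$ converges to $l_h(x)>0$ by Lemma~\ref{convergence_density}; the quotient then goes to $0$. The key observation is that the pathwise integral of a function of $X$ against $dX$ telescopes via the change of variables formula, producing a bounded quantity.

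More precisely, I would first introduce the antiderivative
\[
G(y) := \int_{-\infty}^{y}K\!\left(\frac{z - x}{h}\right)dz,\qquad y\in\mathbb R,
\]
which is well defined because, under Assumption~\ref{assumption_kernel}, the integrand is supported on $[x-h,x+h]$. Since $K\geqslant 0$ the map $G$ is nondecreasing, and since $\int_{\mathbb R}K = 1$ one has $G(y) = 0$ for $y\leqslant x-h$ and $G(y) = h$ for $y\geqslant x+h$; thus $0\leqslant G\leqslant h$ on $\mathbb R$. Moreover $G\in C^1(\mathbb R)$ with $G' = K((\cdot - x)/h)$, which is bounded, so $G$ is Lipschitz and $G\in\textrm{Lip}_{b}^{1}(\mathbb R)$.

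Next, under Assumption~\ref{ergodicity} we have $b\in\textrm{Lip}_{b}^{1}(\mathbb R)$, hence by Proposition~\ref{expression_Skorokhod} the solution $X$ has $\alpha$-H\"older continuous paths for every $\alpha\in\,]0,H[$, and since $H>1/2$ we may fix $\alpha\in\,]1/2,H[$. Proposition~\ref{change_of_variable} then applies to $G$ along (almost) every path of $X$ and gives
\[
\int_{0}^{T}K\!\left(\frac{X(s)-x}{h}\right)dX(s)
= \int_{0}^{T}G'(X(s))\,dX(s)
= G(X(T)) - G(X(0)).
\]
Because $0\leqslant G\leqslant h$, the right-hand side is bounded in absolute value by $h$, almost surely and uniformly in $T$, so
\[
\left|\frac{1}{Th}\int_{0}^{T}K\!\left(\frac{X(s)-x}{h}\right)dX(s)\right|\leqslant \frac1T
\xrightarrow[T\to\infty]{}0\qquad\text{a.s.}
\]

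Finally, Lemma~\ref{convergence_density} gives $\widehat f_{T,h}(x)\to l_h(x)>0$ almost surely, so for $T$ large enough $\widehat f_{T,h}(x)>l_h(x)/2$ almost surely, whence $|\widetilde b_{T,h}(x)|\leqslant (2/T)/l_h(x)\to 0$ almost surely, which in particular yields the announced convergence in probability. I do not expect a genuine obstacle: the only point requiring care is the verification that Proposition~\ref{change_of_variable} applies (H\"older regularity of $X$ and $G\in\textrm{Lip}_{b}^{1}(\mathbb R)$), and the rest is the telescoping identity together with the uniform bound $0\leqslant G\leqslant h$. The real content of the proposition is the contrast with the Skorokhod integral: in the decomposition~\eqref{decomposition_Skorokhod} the additional correction term does \emph{not} vanish after division by $Th$, and it is precisely that term which restores $b(x)$ in the limit, thereby making the Skorokhod-based estimator consistent while the pathwise one is not.
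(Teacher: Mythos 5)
Your proof is correct and follows essentially the same route as the paper: telescoping the pathwise integral via the change-of-variable formula for Young integrals (Proposition \ref{change_of_variable}) applied to a primitive of the kernel, and then using Lemma \ref{convergence_density} for the denominator. The only difference is a pleasant refinement: by choosing the primitive $G$ normalized at $-\infty$, which takes values in $[0,h]$ because $K\geqslant 0$ has support $[-1,1]$ and integral $1$, you bound the numerator deterministically by $1/T$ and obtain almost sure convergence, whereas the paper bounds the increment of its primitive by $\|K\|_{\infty}|X(T)-X(0)|/h$ and must invoke the uniform bound on $\mathbb E(|X(t)|)$ from Hairer to conclude convergence in probability.
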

%


%
\begin{proof}
Let $\mathcal K$ be a primitive function of $K$. By the change of variable formula for Young's integral (Proposition \ref{change_of_variable}):
\begin{eqnarray*}
 \mathcal K\left(\frac{X(T) - x}{h}\right) -
 \mathcal K\left(\frac{X(0) - x}{h}\right) & = &
 \frac{1}{h}
 \int_{0}^{T}K\left(\frac{X(s) - x}{h}\right)dX(s)\\
 & = &
 T\widehat f_{T,h}(x)\widetilde b_{T,h}(x).
\end{eqnarray*}
Then,
\begin{displaymath}
\widetilde b_{T,h}(x) =
\frac{1}{T\widehat f_{T,h}(x)}\left(\mathcal K\left(\frac{X(T) - x}{h}\right) -
\mathcal K\left(\frac{X(0) - x}{h}\right)\right).
\end{displaymath}
Since $\mathcal K$ is differentiable with bounded derivative $K$:
\begin{displaymath}
|\widetilde b_{T,h}(x)|
\leqslant
\frac{\|K\|_{\infty}}{Th\widehat f_{T,h}(x)}
|X(T) - X(0)|.
\end{displaymath}
Finally, as we know by Hairer \cite{HAIRER05}, Proposition 3.12 that
\begin{displaymath}
t\in\mathbb R_+\longmapsto\mathbb E(|X(t)|)
\end{displaymath}
is uniformly bounded, and by Lemma \ref{convergence_density} that $\widehat f_{T,h}(x)$ converges almost surely to $l_h(x) > 0$ as $T\rightarrow\infty$, it follows that $\widetilde b_{T,h}(x)$ converges to $0$ in probability, when $T\rightarrow\infty$.
\end{proof}
\noindent
This is why the Skorokhod integral replaces the pathwise stochastic integral in $\widehat b_{T,h}(x)$.

\subsection{Convergence of the Nadaraya-Watson estimator}

This subsection deals with the consistency and  rate of convergence of the estimators.
\\
\\
The Nadaraya-Watson estimator $\widehat b_{T,h}(x)$ defined by Equation (\ref{main_estimator}) can be decomposed as follows:
\begin{equation}\label{estimator_decomposition}
\widehat b_{T,h}(x) - b(x) =
\frac{B_{T,h}(x)}{\widehat f_{T,h}(x)} +
\frac{S_{T,h}(x)}{\widehat f_{T,h}(x)},
\end{equation}
where $\widehat f_{T,h}(x)$ is defined by (\ref{density}), 
\begin{displaymath}
B_{T,h}(x) :=
\frac{1}{Th}
\int_{0}^{T}
K\left(\frac{X(s) - x}{h}\right)(b(X(s)) - b(x))ds.
\end{displaymath}
and
\begin{displaymath}
S_{T,h}(x) :=
\frac{\sigma}{Th}
\int_{0}^{T}
K\left(\frac{X(s) - x}{h}\right)\delta B(s).
\end{displaymath}
By using the Lipschitz assumption \ref{ergodicity} on $b$  together with the technical lemmas proved in Section 2, the  estimators
$
\widehat b_{T,h}(x)\textrm{ and }
\widehat b_{T,h,\varepsilon}(x)
$
can be studied.
\\
\\

\begin{proposition}\label{rate_of_convergence_NW}
Under Assumptions \ref{ergodicity} and \ref{assumption_kernel},
\begin{displaymath}
|\widehat b_{T,h}(x) - b(x)|
\leqslant
\|b\|_{\normalfont{\textrm{Lip}}}h + \frac{|S_{T,h}(x)|}{\widehat f_{T,h}(x)},
\end{displaymath}
and there exists a positive constant $C$ such that
\begin{displaymath}
{\mathbb E}(S_{T,h}(x)^2)
\leqslant\frac{C}{h^4 T^{2(1-H)}}.
\end{displaymath}
As a consequence, for fixed $h>0$, we have 
\begin{equation}\label{VTh}
T^{\beta}V_{T,h}(x)
\xrightarrow[T\rightarrow\infty]{\mathbb P}0
\textrm{ $;$ }
\forall\beta\in [0,1 - H[, \mbox{ where } \; 
V_{T,h}(x) :=
\left|\frac{S_{T,h}(x)}{\widehat f_{T,h}(x)}\right|.
\end{equation}
Moreover, for $\widehat b_{T,h, \varepsilon}$ defined by (\ref{b_epsilon}),  $\forall \varepsilon>0$, 
\begin{equation}\label{avecepsilon}
|\widehat b_{T,h,\varepsilon}(x)-\widehat b_{T,h}(x)|\leqslant C \frac{\varepsilon h^{-2}T^{2H-2}}{\widehat f_{T,h}(x)}.
\end{equation}
\end{proposition}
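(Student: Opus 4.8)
The plan is to establish the three displayed assertions of Proposition~\ref{rate_of_convergence_NW} in the natural order, building on the decomposition \eqref{estimator_decomposition}. First, for the bias-type bound on $|\widehat b_{T,h}(x)-b(x)|$, I would start from \eqref{estimator_decomposition} and control the term $B_{T,h}(x)/\widehat f_{T,h}(x)$. Since $K$ is supported on $[-1,1]$, the integrand $K((X(s)-x)/h)(b(X(s))-b(x))$ vanishes unless $|X(s)-x|\leqslant h$, so on the support one has $|b(X(s))-b(x)|\leqslant \|b\|_{\textrm{Lip}}|X(s)-x|\leqslant \|b\|_{\textrm{Lip}}h$. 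Pulling this pointwise bound out gives $|B_{T,h}(x)|\leqslant \|b\|_{\textrm{Lip}}h\cdot\widehat f_{T,h}(x)$, and dividing by $\widehat f_{T,h}(x)$ yields the first claim with remainder $|S_{T,h}(x)|/\widehat f_{T,h}(x)$, exactly as stated.

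Second, for the moment bound ${\mathbb E}(S_{T,h}(x)^2)\leqslant C h^{-4}T^{-2(1-H)}$, the idea is to apply Theorem~\ref{control_divergence_integral} with $\varphi(y)=\tfrac{1}{h}K((y-x)/h)$ (rescaled appropriately so that $S_{T,h}(x)=\tfrac{\sigma}{T}\int_0^T\varphi(X(s))\,\delta B(s)$). One checks $\varphi\in\textrm{Lip}_b^1(\mathbb R)$ with $\|\varphi\|_\infty\leqslant \|K\|_\infty/h$ and $\|\varphi'\|_\infty\leqslant \|K'\|_\infty/h^2$. Theorem~\ref{control_divergence_integral} then bounds ${\mathbb E}(|\int_0^T\varphi(X(s))\delta B(s)|^2)$ by a constant times $\big(\int_0^T{\mathbb E}(|\varphi(X(s))|^{1/H})ds\big)^{2H}+\big(\int_0^T{\mathbb E}(|\varphi'(X(s))|^{2})^{1/(2H)}ds\big)^{2H}$. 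Using the uniform-in-$s$ bounds on $\varphi,\varphi'$ and that the time integrals are over $[0,T]$, each bracketed term is at most (const)$\cdot h^{-4}T^{2H}$ — the dominant contribution being from the $\varphi'$ term, whose integrand is $O(h^{-4})$ uniformly. Dividing by $T^2$ from the prefactor $\sigma/T$ gives $O(h^{-4}T^{2H-2})=O(h^{-4}T^{-2(1-H)})$, which is the stated bound.

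Third, the convergence \eqref{VTh} follows by combining the moment bound with Lemma~\ref{convergence_density}. For fixed $h>0$ and $\beta\in[0,1-H[$, Markov's inequality gives ${\mathbb P}(T^\beta|S_{T,h}(x)|>\eta)\leqslant \eta^{-2}T^{2\beta}{\mathbb E}(S_{T,h}(x)^2)\leqslant C\eta^{-2}h^{-4}T^{2\beta-2(1-H)}\to 0$ since $\beta<1-H$; hence $T^\beta S_{T,h}(x)\to 0$ in probability. Since $\widehat f_{T,h}(x)\to l_h(x)>0$ a.s.\ by Lemma~\ref{convergence_density}, a Slutsky-type argument (or writing $V_{T,h}(x)=|S_{T,h}(x)|/\widehat f_{T,h}(x)$ and using the continuous mapping theorem) gives $T^\beta V_{T,h}(x)\to 0$ in probability. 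Finally, \eqref{avecepsilon} is essentially immediate from Corollary~\ref{approximation_Skorokhod}: the numerators of $\widehat b_{T,h,\varepsilon}(x)$ and $\widehat b_{T,h}(x)$ differ (up to the common denominator $T h \widehat f_{T,h}(x)$, or rather $\int_0^T K(\cdot)ds=Th\widehat f_{T,h}(x)$) by $\tfrac{1}{Th}\big(\int_0^T\varphi(X_{x_0}(u))\delta X_{x_0}(u)-S_\varphi(x,\varepsilon,T)\big)$ with $\varphi=\tfrac1h K((\cdot-x)/h)$, which Corollary~\ref{approximation_Skorokhod} bounds by $C_\varphi\varepsilon T^{2H-1}$ with $C_\varphi=O(h^{-2})$; dividing by $Th\widehat f_{T,h}(x)$ gives $C\varepsilon h^{-2}T^{2H-2}/\widehat f_{T,h}(x)$ as claimed.

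The main obstacle is the careful bookkeeping of the powers of $h$ in the second step: one must verify that after rescaling $K$ to $\varphi$ the correct power $h^{-4}$ (not $h^{-2}$ or $h^{-3}$) emerges from the $\varphi'$ term in Theorem~\ref{control_divergence_integral}, and that the $\varphi$ term does not dominate — which requires observing that $\|\varphi\|_\infty^2\sim h^{-2}$ while $\|\varphi'\|_\infty^2\sim h^{-4}$, so for fixed $h$ both are constants but the uniform constant $C$ in the conclusion is the one carrying the $h^{-4}$. A minor subtlety is confirming that Assumption~\ref{ergodicity} supplies enough regularity ($b\in\textrm{Lip}_b^\infty\subset\textrm{Lip}_b^2$ and $b'\leqslant -M$) for both Theorem~\ref{control_divergence_integral} and Corollary~\ref{approximation_Skorokhod} to apply, which it does by the Remarks following Assumption~\ref{ergodicity}. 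Everything else is routine application of Markov's inequality and the already-established convergence of $\widehat f_{T,h}(x)$.
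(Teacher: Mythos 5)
Your proposal is correct and follows essentially the same route as the paper's proof: the Lipschitz-plus-support argument for the bias, Theorem \ref{control_divergence_integral} applied to the kernel-based test function to get the $h^{-4}T^{2(H-1)}$ variance bound (the paper's Lemma \ref{control_S}), Markov's inequality combined with Lemma \ref{convergence_density} and Slutsky for (\ref{VTh}), and Corollary \ref{approximation_Skorokhod} for (\ref{avecepsilon}) (the paper's Lemma \ref{control_NW_epsilon}). The only differences are cosmetic: you absorb the factor $1/h$ into $\varphi$ whereas the paper works with $\varphi_h = K((\cdot - x)/h)$ and the prefactor $1/(T^2h^2)$, and in your last step the normalization is slightly muddled (with $\varphi=\frac1h K((\cdot-x)/h)$ one divides the difference $S_\varphi-\int\varphi\,\delta X$ by $T\widehat f_{T,h}(x)$, not by $Th\widehat f_{T,h}(x)$), but the final bound $C\varepsilon h^{-2}T^{2H-2}/\widehat f_{T,h}(x)$ you state is the correct one.
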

\noindent
Heuristically, Proposition \ref{rate_of_convergence_NW} says that the pointwise quadratic risk of the kernel estimator $\widehat b_{T,h}(x)$ involves a squared bias of order $h^2$ and a variance term of order $1/(h^4T^{2(1 - H)})$. The best possible rate is thus $T^{-\frac 23 (1-H)}$ with a bandwidth choice of order 
$T^{-\frac 13 (1-H)}$. A more rigorous formulation of this is stated below.\\
Note also that it follows from (\ref{avecepsilon}) that the rate of $\widehat b_{T,h,\varepsilon}(x)$ is preserved for any small $\varepsilon$.\\
We want to emphasize that no order condition is set on the kernel, and the bias term is not bounded in the usual way for kernel setting (see e.g. Tsybakov \cite{Tsyb}, Chapter 1). Indeed, we can not refer to the expectation of the numerator as a convolution product, because the existence of a stationary density is not ensured. Would it exist, it would be difficult to set adequate regularity conditions on it. 
\\ 

\noindent
Now, consider a decreasing function $h : [t_0,\infty[\rightarrow ]0,1[$ ($t_0\in\mathbb R_+$) such that
\begin{displaymath}
\lim_{T\rightarrow\infty}
h(T) = 0
\textrm{ and }
\lim_{T\rightarrow\infty}
Th(T) =\infty
\end{displaymath}
and assume that $\widehat f_{T,h(T)}(x)$ fulfills the following assumption.
%


%
\begin{assumption}\label{convergence_density_hT}
There exists $l(x)\in ]0,\infty]$ such that $\widehat f_{T,h(T)}(x)$ converges to $l(x)$ in probability as $T\rightarrow\infty$.
\end{assumption}
\noindent
Subsection \ref{fracGauss} deals with the special case of fractional SDE with Gaussian solution in order to prove that Assumption \ref{convergence_density_hT} holds in this setting.
\\
In Proposition \ref{convergence_NW_time_dependent_h}, the result of Proposition \ref{rate_of_convergence_NW} is extended to the estimator $\widehat b_{T,h(T)}(x)$ under Assumption \ref{convergence_density_hT}.
%


%
\begin{proposition}\label{convergence_NW_time_dependent_h}
Under Assumptions \ref{ergodicity}, \ref{assumption_kernel} and \ref{convergence_density_hT}:
\begin{enumerate}
 \item If there exists $\beta\in ]0,1 - H[$ such that $T^{-\beta} =_{T\rightarrow\infty} o(h(T)^2)$, then
 \begin{displaymath}
 \widehat b_{T,h(T)}(x)
 \xrightarrow[T\rightarrow\infty]{\mathbb P} b(x).
 \end{displaymath}
 \item For every $\gamma\in ]0,\beta[$ such that
 \begin{displaymath}
 h(T) =_{T\rightarrow\infty} o(T^{-\gamma})
 \textrm{ and }
 T^{H - 1 +\gamma} =_{T\rightarrow\infty} o(h(T)^2),
 \end{displaymath}
 then
 \begin{displaymath}
 T^{\gamma}|\widehat b_{T,h(T)}(x) - b(x)|
 \xrightarrow[T\rightarrow\infty]{\mathbb P} 0.
 \end{displaymath}
\end{enumerate}
\end{proposition}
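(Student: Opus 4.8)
The plan is to deduce Proposition \ref{convergence_NW_time_dependent_h} from Proposition \ref{rate_of_convergence_NW} by inserting the time-dependent bandwidth $h(T)$ in place of the fixed $h$, and then tracking the two error terms in the decomposition
\[
|\widehat b_{T,h(T)}(x) - b(x)|
\leqslant
\|b\|_{\textrm{Lip}}h(T) + V_{T,h(T)}(x),
\]
which is exactly the first inequality of Proposition \ref{rate_of_convergence_NW} with $h$ replaced by $h(T)$ (the inequality is pathwise in $T$, so nothing prevents the substitution). The bias term $\|b\|_{\textrm{Lip}}h(T)$ tends to $0$ because $h(T)\to 0$, so the whole game is to control $V_{T,h(T)}(x) = |S_{T,h(T)}(x)|/\widehat f_{T,h(T)}(x)$ in probability.

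For part (1), I would combine the second-moment bound ${\mathbb E}(S_{T,h}(x)^2)\leqslant C/(h^4T^{2(1-H)})$ — again used with $h=h(T)$ — with Assumption \ref{convergence_density_hT}, which gives $\widehat f_{T,h(T)}(x)\xrightarrow{\mathbb P} l(x)\in]0,\infty]$. Since $l(x)>0$, for any $\eta>0$ there is a constant $c_0>0$ such that $\mathbb P(\widehat f_{T,h(T)}(x)\geqslant c_0)\geqslant 1-\eta$ for $T$ large; on that event $V_{T,h(T)}(x)\leqslant |S_{T,h(T)}(x)|/c_0$, and by Markov's inequality $\mathbb P(|S_{T,h(T)}(x)|>\delta)\leqslant C/(\delta^2 c_0^2\,h(T)^4T^{2(1-H)})$. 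The hypothesis $T^{-\beta}=o(h(T)^2)$ with $\beta\in]0,1-H[$ forces $h(T)^4 T^{2(1-H)}\gg T^{2\beta}T^{2(1-H)}\cdot$(something)$\to\infty$ — more precisely $h(T)^2\gg T^{-\beta}$ gives $h(T)^4T^{2(1-H)}\gg T^{2(1-H)-2\beta}\to\infty$ because $\beta<1-H$. Hence $S_{T,h(T)}(x)\to 0$ in probability, and therefore so does $V_{T,h(T)}(x)$; combined with $h(T)\to 0$ this yields $\widehat b_{T,h(T)}(x)\xrightarrow{\mathbb P}b(x)$.

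For part (2), the same scheme is run with the extra factor $T^{\gamma}$. I would write
\[
T^{\gamma}|\widehat b_{T,h(T)}(x) - b(x)|
\leqslant
\|b\|_{\textrm{Lip}}\,T^{\gamma}h(T) + T^{\gamma}V_{T,h(T)}(x),
\]
and handle the two terms separately. The first term tends to $0$ precisely because $h(T)=o(T^{-\gamma})$. For the second, the same Markov estimate as in part (1) shows $\mathbb P(T^{\gamma}|S_{T,h(T)}(x)|>\delta)\leqslant C T^{2\gamma}/(\delta^2 c_0^2\, h(T)^4 T^{2(1-H)})$; using $h(T)^2\gg T^{H-1+\gamma}$ we get $h(T)^4T^{2(1-H)}\gg T^{2(H-1+\gamma)}T^{2(1-H)} = T^{2\gamma}$, so the bound tends to $0$ and $T^{\gamma}V_{T,h(T)}(x)\xrightarrow{\mathbb P}0$. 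The condition $\gamma<\beta<1-H$ guarantees the exponents sit in the right range for both inequalities to have slack; I should check that $\gamma$ can actually be chosen so that both constraints $h(T)=o(T^{-\gamma})$ and $T^{H-1+\gamma}=o(h(T)^2)$ are compatible, but this is exactly what the statement presupposes, so it is a matter of citing the hypotheses rather than proving feasibility.

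The only mildly delicate point, and the one I would treat with care, is the case $l(x)=+\infty$ allowed by Assumption \ref{convergence_density_hT}: there "$\widehat f_{T,h(T)}(x)$ is bounded below by a positive constant with high probability" still holds (indeed with an arbitrarily large constant), so the division by $\widehat f_{T,h(T)}(x)$ only helps, and the argument above goes through verbatim. Beyond that, everything is a routine Markov-inequality-plus-Slutsky argument; the substantive analytic content — the $1/(h^4T^{2(1-H)})$ variance bound resting on Theorem \ref{control_divergence_integral} and the ergodic Proposition \ref{ergodic_theorem} — has already been absorbed into Proposition \ref{rate_of_convergence_NW}, so no new estimate on the Skorokhod integral is needed here.
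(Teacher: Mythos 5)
Your proposal is correct and follows essentially the same route as the paper: both rest on the decomposition and the bound $\mathbb E(S_{T,h}^2)\leqslant C h^{-4}T^{2(H-1)}$ from Proposition \ref{rate_of_convergence_NW} (Lemma \ref{control_S}), a Markov/Chebyshev estimate under the conditions $T^{-\beta}=o(h(T)^2)$ resp. $T^{H-1+\gamma}=o(h(T)^2)$, and the lower bound in probability on $\widehat f_{T,h(T)}(x)$ supplied by Assumption \ref{convergence_density_hT}, including the case $l(x)=+\infty$. The only (immaterial) difference is that you bound the denominator below by a fixed constant $c_0$ with high probability, while the paper splits on the vanishing threshold $T^{H+\beta-1}$ in part (1) and invokes Slutsky's lemma in part (2); the stray $c_0^2$ in your Markov bound for $\mathbb P(|S_{T,h(T)}(x)|>\delta)$ is just a notational slip from setting $\delta=\varepsilon c_0$.
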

\noindent
\textbf{Example.} Consider
\begin{displaymath}
\beta\in
\left]\frac{2}{3}(1 - H),1 - H\right[
\textrm{ and }
h(T) := T^{\frac{H - 1}{3}}.
\end{displaymath}
\begin{itemize}
 \item $Th(T) = T^{H +\frac{2}{3}(1 - H)}\xrightarrow[T\rightarrow\infty]{}\infty$.
 \item $T^{-\beta}/h(T)^2 = T^{-\beta +\frac{2}{3}(1 - H)}\xrightarrow[T\rightarrow\infty]{} 0$.
 \item For every $\gamma\in ]0,(1 - H)/3[$, $h(T)/T^{-\gamma} = T^{\frac{H - 1}{3} +\gamma}\xrightarrow[T\rightarrow\infty]{} 0$.
 \item For every $\gamma\in ]0,(1 - H)/3[$, $T^{H - 1 +\gamma}/h(T)^2 = T^{\frac{H - 1}{3} +\gamma}\xrightarrow[T\rightarrow\infty]{} 0$.
\end{itemize}
\noindent
In Corollary \ref{convergence_NW_time_dependent_h_epsilon}, the result of Proposition \ref{convergence_NW_time_dependent_h} is extended to $\widehat b_{T,h(T),\varepsilon(T)}(x)$ where
\begin{displaymath}
\lim_{T\rightarrow\infty}
\varepsilon(T) = 0.
\end{displaymath}
%


%
\begin{corollary}\label{convergence_NW_time_dependent_h_epsilon}
Under Assumptions \ref{ergodicity}, \ref{assumption_kernel} and \ref{convergence_density_hT}:
\begin{enumerate}
 \item If there exists $\beta\in ]0,1 - H[$ such that
 \begin{displaymath}
 T^{-\beta} =_{T\rightarrow\infty} o(h(T)^2)
 \textrm{ and }
 \varepsilon(T) =_{T\rightarrow\infty} o(h(T)^{-2}T^{2H - 2}),
 \end{displaymath}
 then
 \begin{displaymath}
 \widehat b_{T,h(T),\varepsilon(T)}(x)
 \xrightarrow[T\rightarrow\infty]{\mathbb P} b(x).
 \end{displaymath}
 \item For every $\gamma\in ]0,\beta[$ such that
 \begin{displaymath}
 \left\{
 \begin{array}{rcl}
  h(T) & =_{T\rightarrow\infty} & o(T^{-\gamma}),\\
  T^{H - 1 +\gamma} & =_{T\rightarrow\infty} & o(h(T)^2)\\
  \varepsilon(T) & =_{T\rightarrow\infty} & o(h(T)^{-2}T^{2H - 2 +\gamma})
 \end{array}\right.,
 \end{displaymath}
 then
 \begin{displaymath}
 T^{\gamma}|\widehat b_{T,h(T),\varepsilon(T)}(x) - b(x)|
 \xrightarrow[T\rightarrow\infty]{\mathbb P} 0.
 \end{displaymath}
\end{enumerate}
\end{corollary}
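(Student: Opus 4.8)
The plan is to obtain this corollary by assembling three facts that are already available: Proposition \ref{convergence_NW_time_dependent_h} applied to $\widehat b_{T,h(T)}(x)$, the deterministic comparison bound (\ref{avecepsilon}) of Proposition \ref{rate_of_convergence_NW}, and the in-probability positivity of the limit of $\widehat f_{T,h(T)}(x)$ granted by Assumption \ref{convergence_density_hT}. I would start from the triangle inequality
\begin{displaymath}
|\widehat b_{T,h(T),\varepsilon(T)}(x) - b(x)|
\leqslant
|\widehat b_{T,h(T),\varepsilon(T)}(x) - \widehat b_{T,h(T)}(x)|
+ |\widehat b_{T,h(T)}(x) - b(x)|,
\end{displaymath}
and control the two terms on the right-hand side separately, the whole inequality being multiplied by $T^{\gamma}$ in the second part of the statement.

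The second term is handled directly by Proposition \ref{convergence_NW_time_dependent_h}: in part (1), the hypothesis that $T^{-\beta} = o(h(T)^2)$ for some $\beta\in ]0,1-H[$ is exactly the hypothesis of Proposition \ref{convergence_NW_time_dependent_h}(1), so $\widehat b_{T,h(T)}(x)\to b(x)$ in probability; in part (2), the hypotheses $h(T) = o(T^{-\gamma})$ and $T^{H-1+\gamma} = o(h(T)^2)$ are exactly the hypotheses of Proposition \ref{convergence_NW_time_dependent_h}(2), so $T^{\gamma}|\widehat b_{T,h(T)}(x) - b(x)|\to 0$ in probability. No further work is needed here.

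For the first term I would apply (\ref{avecepsilon}) with $h = h(T)$ and $\varepsilon = \varepsilon(T)$, giving
\begin{displaymath}
|\widehat b_{T,h(T),\varepsilon(T)}(x) - \widehat b_{T,h(T)}(x)|
\leqslant
C\,\frac{\varepsilon(T)\,h(T)^{-2}\,T^{2H-2}}{\widehat f_{T,h(T)}(x)},
\end{displaymath}
and then argue that the right-hand side tends to $0$ in probability (resp. after multiplication by $T^{\gamma}$). The numerator $\varepsilon(T)h(T)^{-2}T^{2H-2}$ is deterministic and tends to $0$ under the condition imposed on $\varepsilon(T)$ (resp. $\varepsilon(T)h(T)^{-2}T^{2H-2+\gamma}\to 0$ under the condition in part (2)), while $1/\widehat f_{T,h(T)}(x)$ is tight: by Assumption \ref{convergence_density_hT}, $\widehat f_{T,h(T)}(x)\to l(x)\in ]0,\infty]$ in probability, so for every $\eta>0$ there are $c_{\eta}>0$ and $T_{\eta}$ with $\mathbb P(\widehat f_{T,h(T)}(x) < c_{\eta})\leqslant\eta$ for $T\geqslant T_{\eta}$ (the case $l(x)=\infty$ being even more favourable since then $1/\widehat f_{T,h(T)}(x)\to 0$ in probability). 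Since a deterministic null sequence divided by such a sequence tends to $0$ in probability, the first term, resp. its product with $T^{\gamma}$, tends to $0$ in probability; combining this with the control of the second term through the triangle inequality yields both assertions.

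I do not anticipate a genuine obstacle: the corollary is a concatenation of Proposition \ref{convergence_NW_time_dependent_h}, the estimate (\ref{avecepsilon}) and Assumption \ref{convergence_density_hT}, the conditions imposed on $\varepsilon(T)$ being exactly those needed to kill the correction prefactor of (\ref{avecepsilon}) (after multiplication by $T^{\gamma}$ in part (2)). The only point requiring a little care is the last step, namely passing from a deterministic rate to convergence in probability of a ratio, which is where the uniform lower bound on $\widehat f_{T,h(T)}(x)$ furnished by Assumption \ref{convergence_density_hT} is used.
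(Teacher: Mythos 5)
Your proof is correct and follows essentially the same route as the paper: triangle inequality, the comparison bound (\ref{avecepsilon}) (Lemma \ref{control_NW_epsilon}) applied with $h = h(T)$ and $\varepsilon = \varepsilon(T)$, and then Proposition \ref{convergence_NW_time_dependent_h} together with Assumption \ref{convergence_density_hT} to make both terms vanish (with the $T^{\gamma}$ factor in part (2)). The only point you gloss over --- that the condition $\varepsilon(T) = o(h(T)^{-2}T^{2H-2})$ (resp. its $\gamma$-version) really does force the deterministic prefactor $\varepsilon(T)h(T)^{-2}T^{2H-2}$ (resp. times $T^{\gamma}$) to tend to $0$, which uses that $h(T)^{-2}T^{2H-2}$ itself vanishes under the other hypotheses --- is treated just as tersely in the paper's own proof, so this is not a gap relative to it.
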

\noindent
\textbf{Example.}  One can take $\varepsilon(T) := h(T)^2$.
%


%
\subsection{Special case of fractional SDE with Gaussian solution}\label{fracGauss}
The purpose of this subsection is to show that Assumption \ref{convergence_density_hT} holds when the drift function in Equation (\ref{main_equation}) is linear with a negative slope. Note also that if $H = 1/2$, then $\widehat f_{T,h(T)}$ is a consistent estimator of the stationary density for Equation (\ref{main_equation}) (see Kutoyants \cite{KUTOYANTS04}, Section 4.2).
\\
\\
Assume that Equation (\ref{main_equation}) has a centered Gaussian stationary solution $X$ and consider the normalized process $Y := X/\sigma_0$ where $\sigma_0 :=\sqrt{\textrm{var}(X_0)}$.
\\
\\
Throughout this subsection, $\nu$ is the standard normal density and the autocorrelation function $\rho$ of $Y$ fulfills the following assumption.
%


%
\begin{assumption}\label{conditions_autocorrelation}
$\displaystyle{\int_{0}^{T}\int_{0}^{T}
|\rho(v - u)|dvdu =_{T\rightarrow\infty}
O(T^{2H})}$.
\end{assumption}
\noindent
The following proposition ensures that under Assumption \ref{conditions_autocorrelation}, $\widehat f_{T,h(T)}$ fulfills Assumption \ref{convergence_density_hT} for every $x\in\mathbb R^*$.
%


%
\begin{proposition}\label{SC_consistency_density_estimator}
Under Assumptions \ref{ergodicity} and \ref{assumption_kernel}, if Equation (\ref{main_equation}) has a centered, Gaussian, stationary solution $X$, the autocorrelation function $\rho$ of $Y := X/\sigma_0$ satisfies Assumption \ref{conditions_autocorrelation} and $T^{2H - 2} =_{T\rightarrow\infty} o(h(T))$, then
\begin{equation}\label{SC_consistency_density_estimator_1}
\widehat f_{T,h(T)}(x)
\xrightarrow[T\rightarrow\infty]{\mathbb P}
\frac{1}{\sigma_0}\nu\left(\frac{x}{\sigma_0}\right) > 0
\end{equation}
for every $x\in\mathbb R^*$.
\end{proposition}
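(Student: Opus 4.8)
The plan is to prove the stronger statement that $\widehat f_{T,h(T)}(x)$ converges to $\ell(x):=\sigma_0^{-1}\nu(x/\sigma_0)$ in $L^2(\Omega)$ (hence in probability), via the bias-variance decomposition
\begin{displaymath}
\mathbb E(|\widehat f_{T,h(T)}(x) -\ell(x)|^2) =
(\mathbb E(\widehat f_{T,h(T)}(x)) -\ell(x))^2 +\textrm{var}(\widehat f_{T,h(T)}(x)),
\end{displaymath}
showing separately that the bias and the variance tend to $0$ as $T\rightarrow\infty$. For the bias, I would use that $X$ is stationary with marginal law $\mathcal N(0,\sigma_0^2)$: by Fubini's theorem $\mathbb E(\widehat f_{T,h}(x)) = h^{-1}\mathbb E(K((X(0) - x)/h))$, which the substitution $y = x + hz$ turns into $\sigma_0^{-1}\int_{\mathbb R}K(z)\nu((x + hz)/\sigma_0)\,dz$. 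Since $K$ has support $[-1,1]$ and integral $1$ (Assumption \ref{assumption_kernel}) and $\nu$ is continuous and bounded, dominated convergence gives $\mathbb E(\widehat f_{T,h}(x))\rightarrow\ell(x)$ as $h = h(T)\rightarrow 0$; this step needs neither Assumption \ref{conditions_autocorrelation} nor the hypothesis $T^{2H-2} = o(h(T))$.

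The core of the proof is the variance bound. I would set $Y := X/\sigma_0$, a centered, stationary, standard Gaussian process with autocorrelation $\rho$, and $\phi_h(y):= h^{-1}K((\sigma_0 y - x)/h)$, so that $\widehat f_{T,h}(x) = T^{-1}\int_0^T\phi_h(Y(s))\,ds$ and
\begin{displaymath}
\textrm{var}(\widehat f_{T,h}(x)) =
\frac{1}{T^2}\int_0^T\int_0^T\textrm{cov}(\phi_h(Y(u)),\phi_h(Y(v)))\,du\,dv.
\end{displaymath}
As $K$ is bounded with compact support, $\phi_h\in L^2(\nu)$; expanding it along the orthonormal Hermite basis of $L^2(\nu)$ and writing $a_k(h)$ for its coefficients, Mehler's formula gives $\textrm{cov}(\phi_h(Y(u)),\phi_h(Y(v))) =\sum_{k\geqslant 1}a_k(h)^2\rho(v - u)^k$. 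Since $|\rho|\leqslant\rho(0) = 1$, the modulus of this sum is at most $|\rho(v-u)|\sum_{k\geqslant 1}a_k(h)^2\leqslant|\rho(v-u)|\,\mathbb E(\phi_h(Y(0))^2)$, and the change of variable used for the bias shows $\mathbb E(\phi_h(Y(0))^2)\leqslant C h^{-1}$ for $h\leqslant 1$. Combining this with Assumption \ref{conditions_autocorrelation},
\begin{displaymath}
\textrm{var}(\widehat f_{T,h}(x))\leqslant
\frac{C}{T^2h}\int_0^T\int_0^T|\rho(v - u)|\,du\,dv
=_{T\rightarrow\infty} O(T^{2H - 2}/h),
\end{displaymath}
which tends to $0$ exactly under the hypothesis $T^{2H-2} = o(h(T))$. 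Together with the bias step this yields the convergence, and $\ell(x) > 0$ because $\nu$ is positive on $\mathbb R$; this is (\ref{SC_consistency_density_estimator_1}).

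The step I expect to be the main obstacle is this covariance estimate, because $\phi_h$ degenerates as $h\rightarrow 0$: its $L^2(\nu)$-norm is of order $h^{-1/2}$, so even the optimal bound $\sum_{k\geqslant 1}a_k(h)^2\leqslant\|\phi_h\|_{L^2(\nu)}^2$ carries a factor $h^{-1}$, and recovering a vanishing variance forces one to combine this with the $L^1$-control of $\rho$ (Assumption \ref{conditions_autocorrelation}, which produces the $T^{2H}$) and the bandwidth lower bound $T^{2H-2} = o(h)$. If one prefers to avoid Mehler's formula, the same estimate can be obtained by expanding the bivariate standard Gaussian density, $|p_r(y_1,y_2) -\nu(y_1)\nu(y_2)|\lesssim|r|$ uniformly on a compact set depending only on $x$ and $\sigma_0$, to control $\textrm{cov}(\phi_h(Y(u)),\phi_h(Y(v)))$ on $\{|\rho(v-u)|\leqslant 1/2\}$, while on $\{|\rho(v-u)| > 1/2\}$ --- a set of Lebesgue measure at most $2\int_0^T\int_0^T|\rho(v-u)|\,du\,dv = O(T^{2H})$ by Markov's inequality --- one falls back on $|\textrm{cov}(\phi_h(Y(u)),\phi_h(Y(v)))|\leqslant\mathbb E(\phi_h(Y(0))^2)\lesssim h^{-1}$; the final variance bound is unchanged.
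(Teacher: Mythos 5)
Your proposal is correct and follows essentially the same route as the paper: stationarity plus dominated convergence for the bias (the paper's $R_{T,x}\to\sigma_0^{-1}\nu(x/\sigma_0)$), and for the stochastic part a Hermite expansion with Mehler's formula, the bound $|\rho|^q\leqslant|\rho|$, a Parseval-type estimate $\sum_{q\geqslant 1}J_{T,x}(q)^2/q! = O(1/h)$, and Assumption \ref{conditions_autocorrelation} to get a variance of order $T^{2H-2}/h(T)$. The only cosmetic difference is that you center by dropping the $k=0$ term of the covariance rather than introducing the centered function $G_{T,x}$, which also lets you bypass the paper's Hermite-rank lemma (Lemma \ref{Hermite_rank_G}).
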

\noindent
Now, consider the fractional Langevin equation
\begin{equation}\label{Langevin_equation}
X(t) = X_0 -\lambda\int_{0}^{t}X(s)ds +\sigma B(t),
\end{equation}
where $\lambda,\sigma > 0$. Equation (\ref{Langevin_equation}) has a unique solution called Ornstein-Uhlenbeck's process.
\\
\\
On the one hand, the drift function of Equation (\ref{Langevin_equation}) fulfills Assumption \ref{ergodicity}. So, under Assumption \ref{assumption_kernel}, by Proposition \ref{rate_of_convergence_NW},
\begin{displaymath}
|\widehat b_{T,h}(x) +\lambda x|
\leqslant
\|b\|_{\normalfont{\textrm{Lip}}}h + V_{T,h}(x),
\end{displaymath}
where
\begin{displaymath}
T^{\beta}V_{T,h}(x)
\xrightarrow[T\rightarrow\infty]{\mathbb P}0
\textrm{ $;$ }
\forall\beta\in [0,2H - 1[.
\end{displaymath}
On the other hand, by Cheridito et al. \cite{CKM03}, Section 2, Equation (\ref{Langevin_equation}) has a centered, Gaussian, stationary solution $X$ such that:
\begin{displaymath}
X(t) =
\sigma\int_{-\infty}^{t}e^{-\lambda(t - u)}dB(u)
\textrm{ $;$ }\forall t\in\mathbb R_+.
\end{displaymath}
Moreover, by Cheridito et al. \cite{CKM03}, Theorem 2.3, the autocorrelation function $\rho$ of $Y := X/\sigma_0$ satisfies
\begin{displaymath}
\rho(T) =_{T\rightarrow\infty} O(T^{2H - 2}).
\end{displaymath}
So, $\rho$ fulfills Assumption \ref{conditions_autocorrelation}.
\\
\\
Consider $\beta\in ]0,2H - 1[$ and $\gamma\in ]0,2H - 1 -\beta[$ such that
\begin{displaymath}
h(T) =_{T\rightarrow\infty} o(T^{-\gamma})
\textrm{ and }
T^{H - 1 +\gamma} =_{T\rightarrow\infty} o(h(T)^2).
\end{displaymath}
Then,
\begin{displaymath}
\lim_{T\rightarrow\infty}
\frac{T^{2H - 2}}{h(T)} =
\lim_{T\rightarrow\infty}
h(T)T^{H - 1 -\gamma}\frac{T^{H - 1 +\gamma}}{h(T)^2} = 0.
\end{displaymath}
Therefore, by Proposition \ref{convergence_NW_time_dependent_h} and Proposition \ref{SC_consistency_density_estimator}:
\begin{displaymath}
T^{\gamma}|\widehat b_{T,h(T)}(x) +\lambda x|
\xrightarrow[T\rightarrow\infty]{\mathbb P} 0
\textrm{ $;$ }
\forall x\in\mathbb R^*.
\end{displaymath}
%


%
\section{Proofs}
%


%
\subsection{Proof of Proposition \ref{expression_Skorokhod}}
On the existence, uniqueness and regularity of the paths of the solution of Equation (\ref{main_equation}), see Lejay \cite{LEJAY10}.
\\
\\
Now, let us prove (\ref{decomposition_Skorokhod}).
\\
\\
Let $X_x$ be the solution of Equation (\ref{expression_Skorokhod}) with initial condition $x\in\mathbb R$. Consider also $\varphi\in\textrm{Lip}_{b}^{1}(\mathbb R)$ and $t > 0$. By Nualart \cite{NUALART06}, Proposition 5.2.3:
\begin{eqnarray*}
 \int_{0}^{t}\varphi(X_x(u))\delta X_x(u) & = &
 \int_{0}^{t}\varphi(X_x(u))b(X_x(u))du +\sigma\int_{0}^{t}\varphi(X_x(u))\delta B(u)\\
 & = &
 \int_{0}^{t}\varphi(X_x(u))dX_x(u)\\
 & &
 -\alpha_H\sigma
 \int_{0}^{t}
 \int_{0}^{t}\varphi'(X_x(u))\mathbf D_vX_x(u)|u - v|^{2H - 2}dvdu.
\end{eqnarray*}
Consider $u,v\in [0,t]$. On the one hand,
\begin{displaymath}
\mathbf D_vX_x(u) = 
\sigma\mathbf 1_{[0,u]}(v) +
\int_{0}^{u}b'(X_x(r))\mathbf D_vX_x(r)dr.
\end{displaymath}
Then,
\begin{displaymath}
\mathbf D_vX_x(u) =
\sigma\mathbf 1_{[0,u]}(v)\exp\left(
\int_{v}^{u}b'(X_x(r))dr\right).
\end{displaymath}
On the other hand,
\begin{displaymath}
\partial_xX_x(u) = 
1 +
\int_{0}^{u}b'(X_x(r))\partial_x X_x(r)dr.
\end{displaymath}
Then,
\begin{displaymath}
\partial_xX_x(u) =
\exp\left(
\int_{0}^{u}b'(X_x(r))dr\right).
\end{displaymath}
Therefore,
\begin{displaymath}
\mathbf D_vX_x(u) =
\sigma\mathbf 1_{[0,u]}(v)
\frac{\partial_x X_x(u)}{\partial_x X_x(v)}
\end{displaymath}
and
\begin{eqnarray*}
 \int_{0}^{t}\varphi(X_x(u))\delta X_x(u) & = &
 \int_{0}^{t}\varphi(X_x(u))dX_x(u)\\
 & &
 -\alpha_H\sigma^2
 \int_{0}^{t}
 \int_{0}^{u}\varphi'(X_x(u))\frac{\partial_x X_x(u)}{\partial_x X_x(v)}|u - v|^{2H - 2}dvdu.
\end{eqnarray*}
%


%
\subsection{Proof of Corollary \ref{approximation_Skorokhod}}
Consider $x\in\mathbb R$ and $\varepsilon,t > 0$. For every $s\in [0,t]$,
\begin{displaymath}
\partial_x X_x(s) = 1 +
\int_{0}^{s}b'(X_x(r))\partial_x X_x(r)dr
\end{displaymath}
and, by Taylor's formula,
\begin{small}
\begin{displaymath}
 X_{x +\varepsilon}(s) - X_x(s) =\varepsilon
 +\int_{0}^{s}
 (X_{x +\varepsilon}(r) - X_x(r))
 \int_{0}^{1}b'(X_x(r) +\theta (X_{x +\varepsilon}(r) - X_x(r)))d\theta dr.
\end{displaymath}
\end{small}
\newline
So, for every $(u,v)\in [0,t]^2$ such that $v < u$,
\begin{displaymath}
\frac{\partial_x X_x(u)}{\partial_x X_x(v)} =
\exp\left(\int_{v}^{u}b'(X_x(r))dr\right)
\end{displaymath}
and
\begin{displaymath}
\frac{X_{x +\varepsilon}(u) - X_x(u)}{X_{x +\varepsilon}(v) - X_x(v)} =
\exp\left(\int_{v}^{u}\int_{0}^{1}b'(X_x(r) +\theta(X_{x +\varepsilon}(r) - X_x(r)))d\theta dr\right).
\end{displaymath}
For a given $\varphi\in\textrm{Lip}_{b}^{1}(\mathbb R)$, by Proposition \ref{expression_Skorokhod},
\begin{displaymath}
 \Delta_{\varphi}^{S}(x,\varepsilon,t)\leqslant
 \alpha_H\sigma^2\int_{0}^{t}\int_{0}^{u}
 |\varphi'(X_x(u))|\Delta_{\varphi}(x,\varepsilon,u,v)(u - v)^{2H - 2}dvdu,
\end{displaymath}
where
\begin{displaymath}
\Delta_{\varphi}^{S}(x,\varepsilon,t) :=
\left|\int_{0}^{t}\varphi(X_x(u))\delta X_x(u) - S_{\varphi}(x,\varepsilon,t)\right|
\end{displaymath}
and, for every $(u,v)\in [0,t]^2$ such that $v < u$,
\begin{displaymath}
\Delta_{\varphi}(x,\varepsilon,u,v) :=
\left|
\frac{\partial_x X_x(u)}{\partial_x X_x(v)} -
\frac{X_{x +\varepsilon}(u) - X_x(u)}{X_{x +\varepsilon}(v) - X_x(v)}\right|.
\end{displaymath}
Since $b'(\mathbb R)\subset ]-\infty,0]$ and $b$ is two times continuously differentiable,
\begin{small}
\begin{eqnarray*}
 \Delta_{\varphi}(x,\varepsilon,u,v) & = &
 \left|\exp\left(\int_{v}^{u}b'(X_x(r))dr\right)\right.\\
 & &
 -\left.
 \exp\left(\int_{v}^{u}\int_{0}^{1}b'(X_x(r) +\theta(X_{x +\varepsilon}(r) - X_x(r)))d\theta dr\right)\right|\\
 & \leqslant &
 \sup_{z\in b'(\mathbb R)}
 e^z\\
 & &
 \times
 \int_{v}^{u}\left|b'(X_x(r)) -
 \int_{0}^{1}b'(X_x(r) +\theta(X_{x +\varepsilon}(r) - X_x(r)))d\theta\right|dr\\
 & \leqslant &
 \int_{v}^{u}\int_{0}^{1}|b'(X_x(r)) -
 b'(X_x(r) +\theta(X_{x +\varepsilon}(r) - X_x(r)))|d\theta dr\\
 & \leqslant &
 \frac{\|b''\|_{\infty}}{2}\int_{v}^{u}|X_{x +\varepsilon}(r) - X_x(r)|dr.
\end{eqnarray*}
\end{small}
\newline
Consider $s\in\mathbb R_+$. By Equation (\ref{main_equation}):
\begin{eqnarray*}
 (X_{x +\varepsilon}(s) - X_x(s))^2 & = &
 \varepsilon^2 + 2\int_{0}^{s}
 (X_{x +\varepsilon}(r) - X_x(r))d(X_{x +\varepsilon} - X_x)(r)\\
 & = &
 \varepsilon^2 + 2\int_{0}^{s}
 (X_{x +\varepsilon}(r) - X_x(r))(b(X_{x +\varepsilon}(r)) - b(X_x(r)))dr.
\end{eqnarray*}
By the mean-value theorem, there exists $x_s\in\mathbb R$ such that
\begin{eqnarray*}
 \partial_s(X_{x +\varepsilon}(s) - X_x(s))^2 & = &
 2(X_{x +\varepsilon}(s) - X_x(s))^2
 \frac{b(X_{x +\varepsilon}(s)) - b(X_x(s))}{X_{x +\varepsilon}(s) - X_x(s)}\\
 & = &
 2(X_{x +\varepsilon}(s) - X_x(s))^2
 b'(x_s)\leqslant
 -2M(X_{x +\varepsilon}(s) - X_x(s))^2
\end{eqnarray*}
and then,
\begin{displaymath}
|X_{x +\varepsilon}(s) - X_x(s)|
\leqslant
\varepsilon e^{-Ms}.
\end{displaymath}
 Therefore,
\begin{eqnarray*}
 \Delta_{\varphi}(x,\varepsilon,u,v)
 & \leqslant &
 \frac{\|b''\|_{\infty}}{2}\varepsilon
 \int_{v}^{u}e^{-Mr}dr\\
 & = &
 \frac{\|b''\|_{\infty}}{2M}\varepsilon
 (e^{-Mv} - e^{-Mu})
 \leqslant
 \frac{\|b''\|_{\infty}}{2M}\varepsilon e^{-Mv}.
\end{eqnarray*}
Finally, using the above bounds, and in a second stage, the integration by parts formula, we get:
\begin{eqnarray*}
 \Delta_{\varphi}^{S}(x,\varepsilon,t) & \leqslant &
 \alpha_H\sigma^2\frac{\|b''\|_{\infty}}{2M}\varepsilon
 \int_{0}^{t}\int_{0}^{u}|\varphi'(X_x(u))|e^{-Mv}(u - v)^{2H - 2}dvdu\\
 & \leqslant &
 \alpha_H\sigma^2\frac{\|b''\|_{\infty}\|\varphi'\|_{\infty}}{2M}\varepsilon
 \int_{0}^{t}e^{-Mv}\int_{v}^{t}(u - v)^{2H - 2}dudv\\
 & = &
 \alpha_H\sigma^2\frac{\|b''\|_{\infty}\|\varphi'\|_{\infty}}{2M(2H - 1)}\varepsilon
 \int_{0}^{t}
 e^{-Mv}(t - v)^{2H - 1}dv\\
 & = &
 \alpha_H\sigma^2\frac{\|b''\|_{\infty}\|\varphi'\|_{\infty}}{2M^2}\varepsilon
 \left(\frac{t^{2H - 1}}{2H - 1} -\int_{0}^{t}e^{-Mv}(t - v)^{2H - 2}dv\right)\\
 & \leqslant &
 \alpha_H\sigma^2\frac{\|b''\|_{\infty}\|\varphi'\|_{\infty}}{2M^2(2H - 1)}\varepsilon
 t^{2H - 1} = C_{\varphi}\varepsilon t^{2H - 1}.
\end{eqnarray*}
%


%
\subsection{Proof of Proposition \ref{ergodic_theorem}}
Consider $\gamma\in ]1/2,H[$, $\delta\in ]H -\gamma,1 -\gamma[$ and $\Omega :=\Omega_-\times\Omega_+$, where $\Omega_-$ (resp. $\Omega_+$) is the completion of $C_{0}^{\infty}(\mathbb R_-,\mathbb R)$ (resp. $C_{0}^{\infty}(\mathbb R_+,\mathbb R)$) with respect to the norm $\|.\|_-$ (resp. $\|.\|_+$) defined by
\begin{displaymath}
\|\omega_-\|_- :=
\sup_{s < t\leqslant 0}
\frac{|\omega_-(t) -\omega_-(s)|}{|t - s|^{\gamma}(1 + |s| + |t|)^{\delta}}
\textrm{ $;$ }
\forall\omega_-\in\Omega_-
\end{displaymath}
(resp.
\begin{displaymath}
\|\omega_+\|_+ :=
\sup_{0\leqslant s < t}
\frac{|\omega_+(t) -\omega_+(s)|}{|t - s|^{\gamma}(1 + |s| + |t|)^{\delta}}
\textrm{ $;$ }\forall\omega_+\in\Omega_+).
\end{displaymath}
By Hairer \cite{HAIRER05}, Section 3 or more clearly by Hairer and Ohashi \cite{HO07}, Lemmas 4.1 and 4.2, there exist a Borel probability measure $\mathbb P$ on $\Omega$ and a transition kernel $P$ from $\Omega_-$ to $\Omega_+$ such that:
\begin{itemize}
 \item The process generated by $(\Omega,\mathbb P)$ is a two-sided fractional Brownian motion $\widetilde B$.
 \item For every Borel set $U$ (resp. $V$) of $\Omega_-$ (resp. $\Omega_+$),
 \begin{displaymath}
 \mathbb P(U\times V) =
 \int_U
 P(\omega_-,V)\mathbb P_-(d\omega_-)
 \end{displaymath}
 where $\mathbb P_-$ is the probability distribution of $(\widetilde B(t))_{t\in\mathbb R_-}$.
\end{itemize}
Let $I :\mathbb R\times\Omega_+\rightarrow C^0(\mathbb R_+,\mathbb R)$ be the It\^o (solution) map for Equation (\ref{main_equation}). In general, $I(x,.)$ with $x\in\mathbb R$ is not a Markov process. However, the solution of Equation (\ref{main_equation}) can be coupled with the past of the driving signal in order to bypass this difficulty. In other words, consider the enhanced It\^o map $\mathfrak I :\mathbb R\times\Omega\rightarrow C^0(\mathbb R_+,\mathbb R\times\Omega_-)$ such that for every $(x,\omega_-,\omega_+)\in\mathbb R\times\Omega$ and $t\in\mathbb R_+$,
\begin{displaymath}
\mathfrak I(x,\omega_-,\omega_+)(t) :=
(I(x,\omega_+)(t),p_{\Omega_-}(\theta(\omega_-,\omega_+)(t)))
\end{displaymath}
where $p_{\Omega_-}$ is the projection from $\Omega$ onto $\Omega_-$,
\begin{displaymath}
\theta(\omega_-,\omega_+)(t) :=
(\omega_-\sqcup\omega_+)(t +\cdot) -
(\omega_-\sqcup\omega_+)(\cdot)
\end{displaymath}
and $\omega_-\sqcup\omega_+$ is the concatenation of $\omega_-$ and $\omega_+$. By Hairer \cite{HAIRER05}, Lemma 2.12, the process $\mathfrak I(x,.)$ is Markovian and has a Feller transition semigroup $(Q(t))_{t\in\mathbb R_+}$ such that for every $t\in\mathbb R_+$, $(x,\omega_-)\in\mathbb R\times\Omega_-$ and every Borel set $U$ (resp. $V$) of $\mathbb R$ (resp. $\Omega_-$),
\begin{displaymath}
Q(t ; (x,\omega_-),U\times V) =
\int_{V}\delta_{I(x,\omega_+)(t)}(U)P(t ; \omega_-,d\omega_+)
\end{displaymath}
where $\delta_y$ is the delta measure located at $y\in\mathbb R$ and $P(t;\omega_-,.)$ is the pushforward measure of $P(\omega_-,.)$ by $\theta(\omega_-,.)(t)$.
\\
\\
In order to prove Proposition \ref{ergodic_theorem}, let us first state the following result from Hairer \cite{HAIRER05} and Hairer and Ohashi \cite{HO07}.
%


%
\begin{theorem}\label{ergodicity_modertely_irregular}
Under Assumption \ref{ergodicity}:
\begin{enumerate}
 \item (Irreducibility) There exists $\tau\in ]0,\infty[$ such that for every $(x,\omega_-)\in\mathbb R\times\Omega_-$ and every nonempty open set $U\subset\mathbb R$,
 \begin{displaymath}
 Q(\tau ; (x,\omega_-),U\times\Omega_-) > 0.
 \end{displaymath}
 \item There exists a unique probability measure $\mu$ on $\mathbb R\times\Omega_-$ such that $\mu(p_{\Omega_-}\in\cdot) =\mathbb P_-$ and
 \begin{displaymath}
 Q(t)\mu =\mu
 \textrm{ $;$ }
 \forall t\in\mathbb R_+.
 \end{displaymath}
\end{enumerate}
\end{theorem}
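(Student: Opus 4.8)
The plan is to deduce Theorem~\ref{ergodicity_modertely_irregular} from the ergodic theory of fractional SDE of Hairer \cite{HAIRER05} and Hairer and Ohashi \cite{HO07}: under Assumption~\ref{ergodicity} the drift $b$ is globally Lipschitz with bounded derivatives and strictly dissipative ($b'\leqslant -M<0$), which are precisely the standing hypotheses of those works, so the task reduces to recalling how irreducibility and existence/uniqueness of an invariant measure $\mu$ for the Feller semigroup $(Q(t))_{t\in\mathbb R_+}$ on $\mathbb R\times\Omega_-$ are obtained there, and to checking that no additional assumption is needed.

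For part~(1), fix $(x,\omega_-)\in\mathbb R\times\Omega_-$ and a nonempty open set $U\subset\mathbb R$. First one shows that the deterministic controlled equation $\dot y(t)=b(y(t))+\dot g(t)$ with $y(0)=x$ can be driven into $U$ at a fixed time $\tau$ by a suitable smooth control $g$: since $b'\leqslant -M<0$ the uncontrolled flow is contracting, so a smooth perturbation steers $y(\tau)$ onto any prescribed target, and the required $\tau$ can be chosen uniformly in $(x,\omega_-)$ by exploiting this contraction. Then one uses that the conditional law $P(\omega_-,\cdot)$ of the future driving path given the past $\omega_-$ is a Gaussian measure on the relevant H\"older-type space whose topological support is the closure of its Cameron--Martin space, hence contains every smooth path; combined with the continuity of the It\^o solution map $I(x,\cdot)$ with respect to the driving signal (Young continuity, valid for $H>1/2$), this yields $Q(\tau;(x,\omega_-),U\times\Omega_-)>0$.

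For the existence in part~(2), one argues by Krylov--Bogolyubov. The semigroup $(Q(t))$ is Feller (as recalled in the construction), and the time-averaged measures $\frac1T\int_0^T Q(t;(x_0,\omega_-),\cdot)\,dt$ form a tight family: the $\mathbb R$-marginals are controlled by the uniform-in-time bound $\sup_{t}\mathbb E(|X(t)|)<\infty$ of Hairer \cite{HAIRER05}, Proposition~3.12, which follows from dissipativity, while the $\Omega_-$-marginal is the fixed law $\mathbb P_-$ because the shift on the past component preserves $\mathbb P_-$ by stationarity of the fractional Brownian motion. Any weak limit point is then $Q(t)$-invariant with $\Omega_-$-marginal $\mathbb P_-$, which produces $\mu$.

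The uniqueness is the main obstacle, and the step where the structure of the $H>1/2$ noise is genuinely used. One invokes Hairer's criterion that an irreducible Markov semigroup possessing the (generalized, or asymptotic) strong Feller property has at most one invariant measure; the required smoothing property is established in Hairer \cite{HAIRER05}, Section~3, and Hairer and Ohashi \cite{HO07}, under hypotheses implied by Assumption~\ref{ergodicity}. Its heart is that the dissipativity $b'\leqslant -M$ makes the dynamics contractive, so that two solutions driven by the same future noise after a large time, but started from different initial conditions and pasts, have laws that become arbitrarily close in a Wasserstein-type distance; combined with the irreducibility of part~(1), this forces uniqueness of $\mu$, and the constraint $\mu(p_{\Omega_-}\in\cdot)=\mathbb P_-$ is inherited along the approximating sequence. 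Putting existence and uniqueness together gives the statement.
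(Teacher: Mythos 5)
Your proposal is correct and follows essentially the same route as the paper, which simply verifies that Assumption \ref{ergodicity} implies the dissipativity hypotheses of the cited works and then invokes Hairer and Ohashi \cite{HO07}, Proposition 5.8, for the irreducibility and Hairer \cite{HAIRER05}, Theorem 6.1 (via Proposition 2.18, Lemma 2.20 and Proposition 3.12), for existence and uniqueness of $\mu$. Your additional sketches of those cited proofs (control argument plus Gaussian support for irreducibility, Krylov--Bogolyubov with the uniform moment bound for existence) are faithful, with the minor caveat that uniqueness in \cite{HAIRER05} is obtained by an explicit coupling construction, the strong-Feller-type criterion you describe being rather the mechanism of \cite{HO07}.
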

\noindent
For a proof of Theorem \ref{ergodicity_modertely_irregular}.(1), see Hairer and Ohashi \cite{HO07}, Proposition 5.8. For a proof of Theorem \ref{ergodicity_modertely_irregular}.(2), see Hairer \cite{HAIRER05}, Theorem 6.1 which is a consequence of Proposition 2.18, Lemma 2.20 and Proposition 3.12.
\\
\\
Since the Feller transition semigroup $Q$ has exactly one invariant measure $\mu$ by Theorem \ref{ergodicity_modertely_irregular}, $\mu$ is ergodic, and since the first component of the process generated by $Q$ is a solution of Equation (\ref{main_equation}), by the ergodic theorem for Markov processes:
\begin{eqnarray*}
 \frac{1}{T}
 \int_{0}^{T}
 \varphi(X(t))dt & = &
 \frac{1}{T}
 \int_{0}^{T}
 (\varphi\circ p_{\mathbb R})(\mathfrak I(X_0,.)(t))dt\\
 & & 
 \xrightarrow[T\rightarrow\infty]{\normalfont{\textrm{a.s./L$^2$}}}
 \mu(\varphi\circ p_{\mathbb R}).
\end{eqnarray*}
Moreover, $\mu = Q(\tau)\mu$. So,
\begin{eqnarray*}
 \mu(\varphi\circ p_{\mathbb R}) & = &
 \int_{\mathbb R\times\Omega_-}(\varphi\circ p_{\mathbb R})(x,\omega_-)(Q(\tau)\mu)(dx,d\omega_-)\\
 & = &
 \int_{\mathbb R\times\Omega_-}\varphi(x)\int_{\mathbb R\times\Omega_-}
 Q(\tau ; (\bar x,\bar\omega_-),(dx,d\omega_-))\mu(d\bar x,d\bar\omega_-)\\
 & \geqslant &
 \min_{x\in C}\varphi(x)\cdot
 \int_{C\times\Omega_-}
 \int_{C\times\Omega_-}
 Q(\tau ; (\bar x,\bar\omega_-),(dx,d\omega_-))\mu(d\bar x,d\bar\omega_-)\\
 & \geqslant &
 \min_{x\in C}\varphi(x)\cdot
 \int_{C\times\Omega_-}
 Q(\tau ; (\bar x,\bar\omega_-),\textrm{int}(C)\times\Omega_-)\mu(d\bar x,d\bar\omega_-).
\end{eqnarray*}
Since
\begin{displaymath}
Q(\tau ; (\bar x,\bar\omega_-),\textrm{int}(C)\times\Omega_-)
> 0
\textrm{ $;$ }
\forall (\bar x,\bar\omega_-)\in\mathbb R\times\Omega_-
\end{displaymath}
by Theorem \ref{ergodicity_modertely_irregular}.(1), then
\begin{displaymath}
\int_{C\times\Omega_-}
Q(\tau ; (\bar x,\bar\omega_-),\textrm{int}(C)\times\Omega_-)\mu(d\bar x,d\bar\omega_-) > 0.
\end{displaymath}
Therefore, $\mu(\varphi\circ p_{\mathbb R}) > 0$.
%


%
\subsection{Proof of Proposition \ref{rate_of_convergence_NW}}

First write that, under Assumption \ref{ergodicity}, for any $s\in [0,T]$ such that $X(s)\in [x - h,x + h]$,

\begin{displaymath}
|b(X(s)) - b(x)|\leqslant\|b\|_{\textrm{Lip}}h.
\end{displaymath}
So,
\begin{equation}\label{bias_control}
\left|\frac{B_{T,h}(x)}{\widehat f_{T,h}(x)}\right|
\leqslant
\|b\|_{\textrm{Lip}}h.
\end{equation}
\noindent
Next, the following Lemma provides a suitable control of $\mathbb E(|S_{T,h}(x)|^2)$.
%


%
\begin{lemma}\label{control_S}
Under Assumptions \ref{ergodicity} and \ref{assumption_kernel}, there exists a deterministic constant $C > 0$, not depending on $h$ and $T$, such that:
\begin{displaymath}
\mathbb E(|S_{T,h}(x)|^2)
\leqslant
CT^{2(H - 1)}h^{-4}.
\end{displaymath}
\end{lemma}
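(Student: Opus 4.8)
The plan is to apply Theorem \ref{control_divergence_integral} to the function $\varphi = \varphi_{x,h}$ defined by $\varphi_{x,h}(y) := \frac{1}{h} K\left(\frac{y - x}{h}\right)$, which is exactly the integrand (up to the factor $\sigma/T$) appearing in $S_{T,h}(x)$. First I would note that under Assumption \ref{assumption_kernel}, $K \in C_b^1$ with $\mathrm{supp}(K) = [-1,1]$, so $\varphi_{x,h} \in \mathrm{Lip}_b^1(\mathbb R)$ with $\|\varphi_{x,h}\|_\infty \leqslant \|K\|_\infty/h$ and $\|\varphi_{x,h}'\|_\infty \leqslant \|K'\|_\infty/h^2$; moreover both are supported on the interval $[x-h, x+h]$. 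Writing $S_{T,h}(x) = \frac{\sigma}{T}\int_0^T \varphi_{x,h}(X(s))\,\delta B(s)$, we get $\mathbb E(|S_{T,h}(x)|^2) = \frac{\sigma^2}{T^2}\,\mathbb E\left(\left|\int_0^T \varphi_{x,h}(X(s))\,\delta B(s)\right|^2\right)$, and Theorem \ref{control_divergence_integral} bounds the latter expectation by
\begin{displaymath}
C\left(\left(\int_0^T \mathbb E(|\varphi_{x,h}(X(s))|^{1/H})\,ds\right)^{2H} + \left(\int_0^T \mathbb E(|\varphi_{x,h}'(X(s))|^2)^{1/(2H)}\,ds\right)^{2H}\right).
\end{displaymath}

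The next step is to bound each of the two time-integrals. Using the sup-norm bounds pointwise, $\mathbb E(|\varphi_{x,h}(X(s))|^{1/H}) \leqslant (\|K\|_\infty/h)^{1/H}$, hence $\int_0^T \mathbb E(|\varphi_{x,h}(X(s))|^{1/H})\,ds \leqslant T (\|K\|_\infty/h)^{1/H}$, and raising to the power $2H$ gives a contribution of order $T^{2H} h^{-2}$. Similarly $\mathbb E(|\varphi_{x,h}'(X(s))|^2)^{1/(2H)} \leqslant (\|K'\|_\infty/h^2)^{1/H}$, so $\int_0^T \mathbb E(|\varphi_{x,h}'(X(s))|^2)^{1/(2H)}\,ds \leqslant T(\|K'\|_\infty/h^2)^{1/H}$, and raising to the power $2H$ gives order $T^{2H} h^{-4}$. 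Since $h \in ]0,1[$, the dominant term is $h^{-4}$, so the expectation inside is $O(T^{2H} h^{-4})$; dividing by $T^2$ yields $\mathbb E(|S_{T,h}(x)|^2) \leqslant C T^{2(H-1)} h^{-4}$ with a constant $C$ depending only on $\sigma$, $H$, $\|K\|_\infty$ and $\|K'\|_\infty$ (absorbing the constant from Theorem \ref{control_divergence_integral}), which is exactly the claimed bound.

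The only mild subtlety — and the step I would be most careful about — is verifying that the process $\varphi_{x,h}(X(\cdot))\mathbf 1_{[0,T]}$ genuinely lies in $\mathrm{dom}(\delta)$ so that the Skorokhod integral and Theorem \ref{control_divergence_integral} apply; this is ensured because $b \in \mathrm{Lip}_b^\infty(\mathbb R)$ under Assumption \ref{ergodicity}, so $X$ has the Malliavin-differentiability properties needed (as already used implicitly in Proposition \ref{expression_Skorokhod} and in the statement of Theorem \ref{control_divergence_integral}), and $\varphi_{x,h} \in \mathrm{Lip}_b^1(\mathbb R)$ is precisely the class of test functions for which Theorem \ref{control_divergence_integral} is stated. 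Beyond that, the argument is just the two elementary sup-norm estimates above; no finer use of the moment bounds $\mathbb E(|X(s)|^p)$ from Hairer \cite{HAIRER05} is needed here, since the compact support of $K$ already makes $\varphi_{x,h}$ bounded uniformly in $s$. The uniformity of $C$ in $h$ and $T$ is immediate from the explicit form of the bounds, the constant in Theorem \ref{control_divergence_integral} being independent of $T$ and of the test function.
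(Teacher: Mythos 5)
Your proposal is correct and follows essentially the same route as the paper: apply Theorem \ref{control_divergence_integral} to the kernel test function, bound the two time-integrals via the sup-norms $\|K\|_{\infty}$ and $\|K'\|_{\infty}$ (giving $T^{2H}h^{-2}$ and $T^{2H}h^{-4}$ contributions after normalization), and divide by $T^2$. The only cosmetic difference is that you fold the factor $1/h$ into the test function whereas the paper keeps it outside, which changes nothing in the final bound.
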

%


%
\begin{proof}
Since $K$ belongs to $C_{b}^{1}(\mathbb R,\mathbb R_+)$, the map
\begin{displaymath}
\varphi_h :
y\in\mathbb R\longmapsto
\varphi_h(y) :=
K\left(\frac{y - x}{h}\right)
\end{displaymath}
belongs to $\textrm{Lip}_{b}^{1}(\mathbb R)$. Moreover, since $K$ and $K'$ are continuous with bounded support $[-1,1]$,
\begin{displaymath}
\left(\int_{0}^{T}\mathbb E(|\varphi_h(X(s))|^{1/H})ds\right)^{2H}
\leqslant\|K\|_{\infty}^2T^{2H}
\end{displaymath}
and
\begin{displaymath}
\left(\int_{0}^{T}\mathbb E(|\varphi_h'(X(s))|^2)^{1/(2H)}ds\right)^{2H}
\leqslant\|K'\|_{\infty}^2T^{2H}h^{-2}.
\end{displaymath}
Therefore, by Theorem \ref{control_divergence_integral}, there exists a deterministic constant $C > 0$, not depending on $h$ and $T$, such that:
\begin{eqnarray*}
 \mathbb E(|S_{T,h}(x)|^2) & = &
 \frac{1}{T^2h^2}\mathbb E\left(\left|\int_{0}^{T}\varphi_h(X(s))\delta B(s)\right|^2\right)\\
 & \leqslant &
 CT^{2(H - 1)}h^{-4}.
\end{eqnarray*}
\end{proof}
\noindent
First, by Inequality (\ref{bias_control}) and Equation (\ref{estimator_decomposition}),
\begin{displaymath}
|\widehat b_{T,h}(x) - b(x)|
\leqslant
\|b\|_{\normalfont{\textrm{Lip}}}h + V_{T,h}(x)
\end{displaymath}
where $V_{T,h}(x)$ is defined by (\ref{VTh}). 
Consider $\beta\in [0,1 - H[$. By Lemma \ref{control_S}:
\begin{displaymath}
T^{2\beta}\mathbb E(|S_{T,h}(x)|^2)
\leqslant
CT^{2(H - 1 +\beta)}h^{-4}
\xrightarrow[T\rightarrow\infty]{} 0.
\end{displaymath}
So,
\begin{displaymath}
T^{\beta}|S_{T,h}(x)|
\xrightarrow[T\rightarrow\infty]{\mathbb P}0.
\end{displaymath}
Moreover, by Lemma \ref{convergence_density}:
\begin{displaymath}
\frac{1}{\widehat f_{T,h}(x)}
\xrightarrow[T\rightarrow\infty]{\mathcal D}
\frac{1}{l_h(x)} > 0.
\end{displaymath}
Therefore, by Slutsky's lemma:
\begin{displaymath}
T^{\beta}V_{T,h}(x)
\xrightarrow[T\rightarrow\infty]{\mathbb P}
0.
\end{displaymath}
Lastly, the bound (\ref{avecepsilon}) follows from the following Lemma.
\begin{lemma}\label{control_NW_epsilon}
Under Assumptions \ref{ergodicity} and \ref{assumption_kernel}, there exists a deterministic constant $C > 0$, not depending on $\varepsilon$, $h$ and $T$, such that:
\begin{displaymath}
|\widehat b_{T,h,\varepsilon}(x) -\widehat b_{T,h}(x)|
\leqslant
C\frac{\varepsilon h^{-2}T^{2H - 2}}{\widehat f_{T,h}(x)}.
\end{displaymath}
\end{lemma}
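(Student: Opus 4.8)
The plan is to compare $\widehat b_{T,h,\varepsilon}(x)$ with $\widehat b_{T,h}(x)$ term by term using the decomposition already available. Both estimators share the same denominator $Th\,\widehat f_{T,h}(x)$ (up to the factor $Th$), so it suffices to bound the difference of the numerators. Writing $\varphi_h(y):=K((y-x)/h)$ as in the proof of Lemma \ref{control_S}, the numerator of $\widehat b_{T,h}(x)$ is $\frac1{Th}\int_0^T\varphi_h(X(s))\delta X(s)$, whereas the numerator of $\widehat b_{T,h,\varepsilon}(x)$ is $\frac1{Th}S_{\varphi_h}(x,\varepsilon,T)$, with $S_{\varphi_h}$ the quantity introduced in Corollary \ref{approximation_Skorokhod}; indeed, unwinding the definition (\ref{b_epsilon}) and noting $\varphi_h'(y)=h^{-1}K'((y-x)/h)$, the extra subtracted term in (\ref{b_epsilon}) is exactly $\alpha_H\sigma^2\int_0^T\int_0^u\varphi_h'(X_{x_0}(u))\tfrac{X_{x_0+\varepsilon}(u)-X_{x_0}(u)}{X_{x_0+\varepsilon}(v)-X_{x_0}(v)}|u-v|^{2H-2}\,dv\,du$, matched against the pathwise integral $\int_0^T\varphi_h(X_{x_0}(u))\,dX_{x_0}(u)$.

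The key step is then a direct application of Corollary \ref{approximation_Skorokhod} with $\varphi=\varphi_h$ and $t=T$: it gives
\begin{displaymath}
\left|\int_0^T\varphi_h(X(u))\delta X(u)-S_{\varphi_h}(x,\varepsilon,T)\right|
\leqslant C_{\varphi_h}\,\varepsilon\,T^{2H-1},
\qquad
C_{\varphi_h}=H\sigma^2\frac{\|b''\|_\infty\|\varphi_h'\|_\infty}{2M^2}.
\end{displaymath}
Since $\|\varphi_h'\|_\infty=h^{-1}\|K'\|_\infty$, one has $C_{\varphi_h}\leqslant C_0 h^{-1}$ for a constant $C_0$ depending only on $\sigma$, $H$, $M$, $\|b''\|_\infty$, $\|K'\|_\infty$ (all finite under Assumption \ref{ergodicity} and Assumption \ref{assumption_kernel}), and in particular not on $\varepsilon$, $h$, $T$. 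Dividing by the common factor $Th\,\widehat f_{T,h}(x)$ yields
\begin{displaymath}
|\widehat b_{T,h,\varepsilon}(x)-\widehat b_{T,h}(x)|
\leqslant\frac{C_{\varphi_h}\varepsilon T^{2H-1}}{Th\,\widehat f_{T,h}(x)}
\leqslant\frac{C_0\,\varepsilon\,h^{-2}T^{2H-2}}{\widehat f_{T,h}(x)},
\end{displaymath}
which is the claimed bound with $C:=C_0$.

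The only genuine obstacle is bookkeeping: one must check that the two denominators really coincide (they do, both equalling $\int_0^T K((X(s)-x)/h)\,ds$, with the convention $X=X_{x_0}$) and that the normalizing $1/(Th)$ prefactor in $\widehat b_{T,h}(x)$ and $\widehat b_{T,h,\varepsilon}(x)$ cancels cleanly against the $1/(Th)$ appearing when we pass from the raw integrals to the estimators; once this is set up, the estimate is an immediate corollary of Corollary \ref{approximation_Skorokhod}. Then (\ref{avecepsilon}) in Proposition \ref{rate_of_convergence_NW} follows at once from this lemma, and the interpretation that the rate of $\widehat b_{T,h,\varepsilon}(x)$ is preserved for small $\varepsilon$ is obtained by combining it with the bound on $V_{T,h}(x)$ already proved, choosing $\varepsilon=\varepsilon(T)$ so that $\varepsilon(T)h^{-2}T^{2H-2}$ is negligible compared to the estimation error $\|b\|_{\textrm{Lip}}h+V_{T,h}(x)$.
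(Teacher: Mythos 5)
Your proposal is correct and follows essentially the same route as the paper: introduce $\varphi_h(y)=K((y-x)/h)\in\textrm{Lip}_b^1(\mathbb R)$, apply Corollary \ref{approximation_Skorokhod} with $t=T$ to bound the difference of the numerators by $C_{\varphi_h}\varepsilon T^{2H-1}$ with $\|\varphi_h'\|_\infty=h^{-1}\|K'\|_\infty$, and divide by the common denominator $Th\,\widehat f_{T,h}(x)$. The bookkeeping you flag (identification of the extra term in (\ref{b_epsilon}) with $\varphi_h'$ and the cancellation of the $1/(Th)$ normalization) is exactly how the paper's proof proceeds, so there is nothing to add.
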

%


%
\begin{proof}
Since $K$ belongs to $C_{b}^{1}(\mathbb R,\mathbb R_+)$, the map
\begin{displaymath}
\varphi_h :
y\in\mathbb R\longmapsto
\varphi_h(y) :=
K\left(\frac{y - x}{h}\right)
\end{displaymath}
belongs to $\textrm{Lip}_{b}^{1}(\mathbb R)$. Consider
\begin{eqnarray*}
 S_h(x_0,\varepsilon,T) & := &
 \int_{0}^{T}\varphi_h(X_{x_0}(u))dX_{x_0}(u)\\
 & &
 -\alpha_H\sigma^2
 \int_{0}^{T}
 \int_{0}^{u}\varphi_h'(X_{x_0}(u))\frac{X_{x_0 +\varepsilon}(u) - X_{x_0}(u)}{X_{x_0 +\varepsilon}(v) - X_{x_0}(v)}|u - v|^{2H - 2}dvdu.
\end{eqnarray*}
By Corollary \ref{approximation_Skorokhod}:
\begin{eqnarray*}
 \left|\int_{0}^{T}\varphi_h(X_{x_0}(u))\delta X_{x_0}(u) -
 S_h(x_0,\varepsilon,T)\right|
 & \leqslant &
 H\sigma^2\frac{\|b''\|_{\infty}\|\varphi_h'\|_{\infty}}{M^2}
 \varepsilon T^{2H - 1}\\
 & \leqslant &
 C\varepsilon h^{-1}T^{2H - 1},
\end{eqnarray*}
where
\begin{displaymath}
C :=
\frac{H\sigma^2\|b''\|_{\infty}\|K'\|_{\infty}}{M^2}.
\end{displaymath}
Therefore,
\begin{displaymath}
|\widehat b_{T,h,\varepsilon}(x) -\widehat b_{T,h}(x)|
\leqslant
C\frac{\varepsilon h^{-2}T^{2H - 2}}{\widehat f_{T,h}(x)}.
\end{displaymath}
\end{proof}


%
\subsection{Proof of Proposition \ref{convergence_NW_time_dependent_h}}
On the one hand, assume that there exists $\beta\in ]0,1 - H[$ such that
\begin{displaymath}
T^{-\beta} =_{T\rightarrow\infty} o(h(T)^2)
\end{displaymath}
in order to show the consistency of the estimator $\widehat b_{T,h(T)}(x)$. First, let us prove that
\begin{equation}\label{convergence_NW_time_dependent_h_1}
\frac{S_{T,h(T)}(x)}{\widehat f_{T,h(T)}(x)}
\xrightarrow[T\rightarrow\infty]{\mathbb P}
0.
\end{equation}
For $\varepsilon > 0$ arbitrarily chosen:
\begin{displaymath}
\mathbb P\left(
\left|\frac{S_{T,h(T)}(x)}
{\widehat f_{T,h(T)}(x)}\right|\geqslant\varepsilon\right)
\leqslant
\mathbb P(|S_{T,h(T)}(x)|\geqslant\varepsilon T^{H +\beta - 1}) +
\mathbb P(\widehat f_{T,h(T)}(x) < T^{H +\beta - 1}).
\end{displaymath}
By Lemma \ref{control_S}:
\begin{displaymath}
\mathbb P(|S_{T,h(T)}(x)|\geqslant\varepsilon T^{H +\beta - 1})
\leqslant
C\varepsilon^{-2}|h(T)^{-2}T^{-\beta}|^2
\xrightarrow[T\rightarrow\infty]{} 0.
\end{displaymath}
So, since
\begin{displaymath}
\widehat f_{T,h(T)}(x)
\xrightarrow[T\rightarrow\infty]{\mathbb P}
l(x)\in ]0,\infty],
\end{displaymath}
the convergence result (\ref{convergence_NW_time_dependent_h_1}) is true.
\\
\\
Moreover, by Inequality (\ref{bias_control}):
\begin{equation}\label{convergence_NW_time_dependent_h_2}
\frac{B_{T,h(T)}(x)}{\widehat f_{T,h(T)}(x)}
\xrightarrow[T\rightarrow\infty]{\textrm{a.s.}}
0.
\end{equation}
Therefore, by the convergence results (\ref{convergence_NW_time_dependent_h_1}) and (\ref{convergence_NW_time_dependent_h_2}) together with Equation (\ref{estimator_decomposition}):
\begin{displaymath}
\widehat b_{T,h(T)}(x)
\xrightarrow[T\rightarrow\infty]{\mathbb P}b(x).
\end{displaymath}
On the other hand, let $\gamma\in ]0,\beta[$ be arbitrarily chosen such that
\begin{displaymath}
h(T) =_{T\rightarrow\infty} o(T^{-\gamma})
\textrm{ and }
T^{H - 1 +\gamma} =_{T\rightarrow\infty} o(h(T)^2)
\end{displaymath}
in order to show that
\begin{equation}\label{convergence_NW_time_dependent_h_3}
T^{\gamma}|\widehat b_{T,h(T)}(x) - b(x)|
\xrightarrow[T\rightarrow\infty]{\mathcal D} 0.
\end{equation}
First, by Inequality (\ref{bias_control}) and Equation (\ref{estimator_decomposition}):
\begin{equation}\label{convergence_NW_time_dependent_h_4}
T^{\gamma}|\widehat b_{T,h(T)}(x) - b(x)|
\leqslant
\|b\|_{\normalfont{\textrm{Lip}}}T^{\gamma}h(T) + T^{\gamma}V_{T,h(T)}(x).
\end{equation}
By Lemma \ref{control_S}:
\begin{displaymath}
T^{2\gamma}\mathbb E(|S_{T,h(T)}(x)|^2)
\leqslant
C|h(T)^{-2}T^{H - 1 +\gamma}|^2
\xrightarrow[T\rightarrow\infty]{} 0.
\end{displaymath}
So, since
\begin{displaymath}
\widehat f_{T,h(T)}(x)
\xrightarrow[T\rightarrow\infty]{\mathbb P}
l(x)\in ]0,\infty],
\end{displaymath}
by Slutsky's lemma:
\begin{displaymath}
T^{\gamma}V_{T,h(T)}(x)
\xrightarrow[T\rightarrow\infty]{\mathcal D}
0.
\end{displaymath}
Finally, since $h(T) =_{T\rightarrow\infty} o(T^{-\gamma})$, by Equation (\ref{convergence_NW_time_dependent_h_4}), the convergence result (\ref{convergence_NW_time_dependent_h_3}) is true.
%


%
\subsection{Proof of Corollary \ref{convergence_NW_time_dependent_h_epsilon}}
In order to establish a rate of convergence for $\widehat b_{T,h,\varepsilon}(x)$, Lemma \ref{control_S} and Lemma \ref{control_NW_epsilon} provide a suitable control.
%


%
%
\noindent
Indeed, by Lemma \ref{control_NW_epsilon}, there exists a deterministic constant $C > 0$ such that:
\begin{eqnarray*}
 |\widehat b_{T,h(T),\varepsilon(T)}(x) - b(x)|
 & \leqslant &
 |\widehat b_{T,h(T),\varepsilon(T)}(x) -\widehat b_{T,h(T)}(x)| +
 |\widehat b_{T,h(T)}(x) - b(x)|\\
 & \leqslant &
  C
 \frac{\varepsilon(T)h(T)^{-2}T^{2H - 2}}{\widehat f_{T,h(T)}(x)} +
 |\widehat b_{T,h(T)}(x) - b(x)|.
\end{eqnarray*}
Proposition \ref{convergence_NW_time_dependent_h} allows to conclude.
%


%
\subsection{Proof of Proposition \ref{SC_consistency_density_estimator}}
 Consider a random variable $U\rightsquigarrow\mathcal N(0,1)$ and
\begin{displaymath}
\mathcal G :=
\{G :\mathbb R\rightarrow\mathbb R :
\mathbb E(G(U)) = 0
\textrm{ and }\mathbb E(G(U)^2) <\infty\},
\end{displaymath}
which is a subset of $L^2(\mathbb R,\nu(y)dy)$.
\\
\\
The Hermite polynomials
\begin{displaymath}
H_q(y) :=
(-1)^qe^{y^2/2}\frac{d^q}{dy^q}e^{-y^2/2}
\textrm{ $;$ }
y\in\mathbb R
\textrm{, }
q\in\mathbb N
\end{displaymath}
form a complet orthogonal system of functions of $L^2(\mathbb R,\nu(y)dy)$ such that
\begin{displaymath}
\mathbb E(H_q(U)H_p(U)) = q!\delta_{p,q}
\textrm{ $;$ }
\forall p,q\in\mathbb N.
\end{displaymath}
By Taqqu \cite{TAQQU75} (see p. 291) and Puig et al. \cite{PPS02}, Lemma 3.3:
\begin{enumerate}
 \item For any $G\in\mathcal G$ and $y\in\mathbb R$,
 \begin{equation}\label{Hermite_decomposition}
 G(y) =\sum_{q = m(G)}^{\infty}\frac{J(q)}{q!}H_q(y)
 \end{equation}
 in $L^2(\mathbb R,\nu(y)dy)$, where
 \begin{displaymath}
 J(q) :=\mathbb E(G(U)H_q(U))
 \textrm{ $;$ }\forall q\in\mathbb N
 \end{displaymath}
 and
 \begin{displaymath}
 m(G) :=
 \inf\{q\in\mathbb N : J(q)\not= 0\}.
 \end{displaymath}
 \item (Mehler's formula) For any centered, normalized and stationary Gaussian process $Z$ of autocorrelation function $R$:
 \begin{equation}\label{Mehler_formula}
 \mathbb E(H_q(Z(u))H_p(Z(v))) =
 q!R(v - u)^q\delta_{p,q}
 \textrm{ $;$ }
 \forall u,v\in\mathbb R_+
 \textrm{, }
 \forall p,q\in\mathbb N.
 \end{equation}
\end{enumerate}
Consider the map $K_T :\mathbb R\rightarrow\mathbb R$ defined by:
\begin{displaymath}
K_T(y) :=
\frac{1}{h(T)}K\left(\frac{y}{h(T)}\right)
\textrm{ $;$ }
\forall y\in\mathbb R.
\end{displaymath}
In order to use (\ref{Hermite_decomposition}) and (\ref{Mehler_formula}) to prove the convergence result (\ref{SC_consistency_density_estimator_1}), note that $\widehat f_{T,h(T)}(x)$ can be rewritten as
\begin{displaymath}
\widehat f_{T,h(T)}(x) =
\frac{1}{T}
\int_{0}^{T}G_{T,x}(Y(s))ds - R_{T,x},
\end{displaymath}
where
\begin{displaymath}
R_{T,x} :=
\frac{1}{\sigma_0}\left(K_T\ast\nu\left(\frac{.}{\sigma_0}\right)\right)(x)
\textrm{ $;$ }
\forall y\in\mathbb R
\end{displaymath}
and
\begin{displaymath}
G_{T,x}(y) :=
K_T(\sigma_0y - x) - R_{T,x}.
\end{displaymath}
%


%
\begin{lemma}\label{Hermite_rank_G}
The map $G_{T,x}$ belongs to $\mathcal G$ and there exists $T_x > 0$ such that
\begin{displaymath}
m(G_{T,x}) = 1
\textrm{ $;$ }
\forall T > T_x.
\end{displaymath}
\end{lemma}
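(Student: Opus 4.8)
The plan is to unpack the definitions of $G_{T,x}$, $R_{T,x}$ and $K_T$ and verify the two claims separately. For membership in $\mathcal{G}$, I would first check that $G_{T,x}(U)$ is centered: since $U\rightsquigarrow\mathcal N(0,1)$, the law of $\sigma_0 U$ is centered Gaussian with variance $\sigma_0^2=\mathrm{var}(X_0)$, so $\mathbb E(K_T(\sigma_0 U-x))$ is exactly the value at $x$ of the convolution of $K_T$ with the density $y\mapsto \frac{1}{\sigma_0}\nu(y/\sigma_0)$, i.e. $R_{T,x}$; hence $\mathbb E(G_{T,x}(U))=0$. Square integrability is immediate because $K$ is bounded with compact support (Assumption \ref{assumption_kernel}), so $K_T$ is bounded and compactly supported, whence $G_{T,x}$ is bounded and $\mathbb E(G_{T,x}(U)^2)<\infty$.

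The substantive claim is that the Hermite rank $m(G_{T,x})$ equals $1$ for all large $T$, i.e. that the first Hermite coefficient $J(1)=\mathbb E(G_{T,x}(U)H_1(U))=\mathbb E(U\,G_{T,x}(U))$ is nonzero once $T>T_x$. Since $R_{T,x}$ is constant in $U$ and $\mathbb E(U)=0$, this reduces to showing $\mathbb E(U\,K_T(\sigma_0 U-x))\neq 0$. I would compute this expectation explicitly: writing it as $\int_{\mathbb R} u\,K_T(\sigma_0 u-x)\nu(u)\,du$ and using $\nu'(u)=-u\nu(u)$, integrate by parts to get $\int_{\mathbb R} K_T'(\sigma_0 u-x)\sigma_0\,\nu(u)\,du$ — wait, more cleanly: substitute $y=\sigma_0 u - x$, so $u=(y+x)/\sigma_0$, and the integral becomes $\frac{1}{\sigma_0^2}\int_{\mathbb R}(y+x)K_T(y)\nu\!\left(\frac{y+x}{\sigma_0}\right)dy = \frac{1}{\sigma_0^2}\int_{-h(T)}^{h(T)}(y+x)K_T(y)\nu\!\left(\frac{y+x}{\sigma_0}\right)dy$. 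As $T\to\infty$, $h(T)\to 0$, the support shrinks to $\{0\}$, and after normalizing ($K_T$ has integral $1$) one finds this expression converges to $\frac{x}{\sigma_0^2}\nu(x/\sigma_0)$, which is strictly positive for $x\in\mathbb R^*$ (note $x\neq 0$ is exactly why the statement of Proposition \ref{SC_consistency_density_estimator} restricts to $\mathbb R^*$). Hence for $T$ larger than some threshold $T_x$, the quantity stays bounded away from $0$, so $J(1)\neq 0$ and $m(G_{T,x})=1$.

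The main obstacle is making the limit computation of $\mathbb E(U\,K_T(\sigma_0 U-x))$ rigorous: one must control the approximation of the shrinking-support integral by its "delta-like" limit, using the continuity of $y\mapsto (y+x)\nu((y+x)/\sigma_0)$ at $y=0$ together with $\int K_T=1$ and $K_T\ge 0$ — a standard approximate-identity argument, but one should check uniformity carefully enough to extract an explicit $T_x$ depending on $x$ (and on $h$, $\sigma_0$). An alternative, slightly cleaner route avoiding differentiability of $K$ is to write $\mathbb E(U G_{T,x}(U)) = \mathrm{Cov}(U, K_T(\sigma_0 U - x))$ and expand $K_T(\sigma_0 U - x)$ itself, but the approximate-identity computation above is the most transparent and is what I would carry out.
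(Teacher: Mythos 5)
Your proposal is correct, and up to the final step it is the same argument as the paper's: centering of $G_{T,x}(U)$ via the convolution identity defining $R_{T,x}$, square integrability from $K$ bounded with compact support, and the reduction of the rank claim to showing $J_{T,x}(1)=\mathbb E\bigl(U\,K_T(\sigma_0U-x)\bigr)\neq 0$ for $T$ large, using $x\neq 0$ and $h(T)\to 0$. Where you diverge is in how non-vanishing is concluded: you compute the limit of $J_{T,x}(1)$ by an approximate-identity argument, obtaining $\frac{x}{\sigma_0^{2}}\nu(x/\sigma_0)\neq 0$, which forces you to handle the uniformity issue you yourself flag in order to extract $T_x$. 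The paper avoids any limit computation: once $T$ is large enough that $h(T)<|x|$, the integration window $[(x-h(T))/\sigma_0,(x+h(T))/\sigma_0]$ lies entirely on one side of the origin, so the integrand $y\,K_T(\sigma_0y-x)\,\nu(y)$ is nonnegative (resp. nonpositive) there and not almost everywhere zero, whence $J_{T,x}(1)>0$ for $x>0$ and $J_{T,x}(1)<0$ for $x<0$ directly. The sign argument is lighter and gives $T_x$ immediately (any $T$ with $h(T)<|x|$); your route costs a bit more bookkeeping but yields the asymptotic value of the first Hermite coefficient as a by-product. Both use exactly the same structural facts, so either is acceptable.
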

%


%
\begin{proof}
On the one hand, since $K_T$ is continuous and its support is compact, $G_{T,x}\in L^2(\mathbb R,\nu(y)dy)$. Moreover,
\begin{eqnarray*}
 \mathbb E(G_{T,x}(U)) & = &
 \int_{-\infty}^{\infty}G_{T,x}(y)\nu(y)dy\\
 & = &
 \int_{-\infty}^{\infty}K_T(\sigma_0y - x)\nu(y)dy - R_{T,x} = 0.
\end{eqnarray*}
So, $G_{T,x}\in\mathcal G$.
\\
\\
On the other hand, for every $q\in\mathbb N$, by putting $J_{T,x}(q) :=\mathbb E(G_{T,x}(U)H_q(U))$,
\begin{eqnarray*}
 J_{T,x}(1) & = &
 \int_{-\infty}^{\infty}
 G_{T,x}(y)H_1(y)\nu(y)dy\\
 & = &
 \int_{(x - h(T))/\sigma_0}^{(x + h(T))/\sigma_0}
 K_T(\sigma_0y - x)\nu(y)ydy -
 R_{T,x}\int_{-\infty}^{\infty}H_0(y)H_1(y)\nu(y)dy\\
 & = &
 \int_{(x - h(T))/\sigma_0}^{(x + h(T))/\sigma_0}
 K_T(\sigma_0y - x)\nu(y)ydy.
\end{eqnarray*}
For any $x > 0$, there exists $T_{x}^{+} > 0$ such that for every $T > T_{x}^{+}$,
\begin{displaymath}
I_{T,x} :=
\left[\frac{x - h(T)}{\sigma_0} ; \frac{x + h(T)}{\sigma_0}\right]
\subset ]0,\infty[.
\end{displaymath}
For every $T > T_{x}^{+}$, since $y\mapsto K_T(\sigma_0y - x)$, $\nu$ and $\textrm{Id}_{\mathbb R}$ are continuous and strictly positive on $I_{T,x}^{\circ}$, $J_{T,x}(1) > 0$. Symmetrically, for every $x < 0$, there exists $T_{x}^{-} > 0$ such that for every $T > T_{x}^{-}$, $J_{T,x}(1) < 0$. This concludes the proof.
\end{proof}
%


%
\begin{lemma}\label{asymptotic_behavior_J}
For every $x\in\mathbb R^*$,
\begin{displaymath}
\sum_{q = 1}^{\infty}
\frac{J_{T,x}(q)^2}{q!} =_{T\rightarrow\infty}
O\left(\frac{1}{h(T)}\right).
\end{displaymath}
\end{lemma}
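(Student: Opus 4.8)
The plan is to estimate the Hermite coefficients $J_{T,x}(q) = \mathbb E(G_{T,x}(U)H_q(U))$ directly from their integral representation. Recall that $G_{T,x}(y) = K_T(\sigma_0 y - x) - R_{T,x}$, and since $R_{T,x}\,\mathbb E(H_q(U)) = 0$ for $q\geqslant 1$, one has for every $q\geqslant 1$
\begin{displaymath}
J_{T,x}(q) = \int_{-\infty}^{\infty} K_T(\sigma_0 y - x) H_q(y)\nu(y)\,dy.
\end{displaymath}
First I would perform the change of variable $z = \sigma_0 y - x$, which turns this into $\sigma_0^{-1}\int K_T(z) H_q\!\big((z+x)/\sigma_0\big)\nu\!\big((z+x)/\sigma_0\big)\,dz$, and then a further rescaling $z = h(T)w$ using $K_T(z) = h(T)^{-1}K(z/h(T))$ and $\mathrm{supp}(K)=[-1,1]$ to get an integral over $w\in[-1,1]$ of $K(w)$ against $H_q$ and $\nu$ evaluated near $x/\sigma_0$. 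The point is that $K$ integrates to $1$, so as $T\to\infty$ (hence $h(T)\to 0$) each $J_{T,x}(q)$ converges to $\psi_q(x) := \sigma_0^{-1}H_q(x/\sigma_0)\nu(x/\sigma_0)$, and more importantly $|J_{T,x}(q)|$ is, uniformly in $T$ large, bounded by a constant times $\sup_{w\in[-1,1]}|H_q((x+h(T)w)/\sigma_0)\nu((x+h(T)w)/\sigma_0)|$.

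The heart of the argument is then to control $\sum_{q=1}^\infty J_{T,x}(q)^2/q!$. The key identity, coming from the Hermite expansion of $G_{T,x}$ in $L^2(\mathbb R,\nu(y)dy)$ together with orthogonality (equation (\ref{Hermite_decomposition}) and $\mathbb E(H_q(U)H_p(U)) = q!\delta_{p,q}$), is Parseval's relation:
\begin{displaymath}
\sum_{q=1}^{\infty}\frac{J_{T,x}(q)^2}{q!} = \mathbb E(G_{T,x}(U)^2) = \int_{-\infty}^{\infty} G_{T,x}(y)^2\nu(y)\,dy,
\end{displaymath}
where the sum starts at $q=1$ rather than $q=0$ precisely because $\mathbb E(G_{T,x}(U))=0$ by Lemma \ref{Hermite_rank_G}. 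So the problem reduces to showing $\mathbb E(G_{T,x}(U)^2) = O(1/h(T))$. Expanding the square, $\mathbb E(G_{T,x}(U)^2) = \mathbb E(K_T(\sigma_0 U - x)^2) - R_{T,x}^2$; since $R_{T,x}$ is bounded (it converges to $\sigma_0^{-1}\nu(x/\sigma_0)$), it suffices to bound $\mathbb E(K_T(\sigma_0 U - x)^2)$. Writing this out, $\mathbb E(K_T(\sigma_0 U - x)^2) = \int K_T(\sigma_0 y - x)^2\nu(y)\,dy$, and after the substitutions $z=\sigma_0 y - x$ then $z = h(T)w$ one gets $h(T)^{-1}\sigma_0^{-1}\int_{-1}^{1}K(w)^2\nu((x+h(T)w)/\sigma_0)\,dw$. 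Since $\nu$ is bounded and $K\in C_b^1$ has support $[-1,1]$, this integral is at most a constant, giving the claimed $O(1/h(T))$ bound.

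The main obstacle is essentially bookkeeping rather than a conceptual difficulty: one has to be careful that Parseval's identity applies (which requires $G_{T,x}\in L^2(\mathbb R,\nu(y)dy)$, already established in Lemma \ref{Hermite_rank_G}), and that the change of variables and the use of $\mathrm{supp}(K)=[-1,1]$ are handled cleanly so that the constant in the $O(1/h(T))$ is genuinely independent of $T$ for $T$ large (it will depend on $x$, $\sigma_0$, $\|K\|_\infty$ and $\|\nu\|_\infty$, which is allowed since the statement is for fixed $x\in\mathbb R^*$). One small point worth checking explicitly: the cross term $-2R_{T,x}\,\mathbb E(K_T(\sigma_0 U - x))$ in the expansion of $\mathbb E(G_{T,x}(U)^2)$ equals $-2R_{T,x}\cdot \sigma_0 R_{T,x}$ up to the normalization, i.e. it combines with $R_{T,x}^2$ to leave only the quadratic term $\mathbb E(K_T(\sigma_0 U-x)^2)$ minus a bounded quantity; tracking the $\sigma_0$ factors here is where a sign or constant error could creep in, but it does not affect the order of magnitude.
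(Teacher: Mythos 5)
Your proposal is correct and follows essentially the same route as the paper: Parseval's identity $\sum_{q\geqslant 1}J_{T,x}(q)^2/q! = \mathbb E(G_{T,x}(U)^2)$, boundedness of $R_{T,x}$ (which tends to $\sigma_0^{-1}\nu(x/\sigma_0)$), and the change of variables showing $\int K_T(\sigma_0 y - x)^2\nu(y)\,dy\leqslant 2\|K\|_\infty^2\|\nu\|_\infty/(\sigma_0 h(T))$. The only cosmetic differences are that the paper bounds $(a-b)^2\leqslant 2a^2+2b^2$ instead of expanding the square exactly (so your worry about the cross term is moot, and in any case $\mathbb E(K_T(\sigma_0 U - x)) = R_{T,x}$ exactly, by the centering in Lemma \ref{Hermite_rank_G}), and your opening estimates of the individual coefficients $J_{T,x}(q)$ are not needed.
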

%


%
\begin{proof}
Since $G_{T,x}\in L^2(\mathbb R,\nu(y)dy)$, by Parseval's inequality:
\begin{eqnarray*}
 \sum_{q = 1}^{\infty}
 \frac{J_{T,x}(q)^2}{q!} & = &
 \mathbb E(G_{T,x}(U)^2)\\
 & = &
 \int_{-\infty}^{\infty}
 (K_T(\sigma_0y - x) - R_{T,x})^2\nu(y)dy\\
 & \leqslant &
 2\int_{-\infty}^{\infty}
 K_T(\sigma_0y - x)^2\nu(y)dy +
 2R_{T,x}^{2}.
\end{eqnarray*}
On the one hand,
\begin{displaymath}
R_{T,x}
\xrightarrow[T\rightarrow\infty]{}
\frac{1}{\sigma_0}\nu\left(\frac{x}{\sigma_0}\right).
\end{displaymath}
So,
\begin{displaymath}
R_{T,x}^{2} =_{T\rightarrow\infty} O(1).
\end{displaymath}
On the other hand,
\begin{eqnarray*}
 \int_{-\infty}^{\infty}K_T(\sigma_0y - x)^2\nu(y)dy & = &
 \frac{1}{\sigma_0h(T)}\int_{-1}^{1}K(y)^2\nu\left(\frac{h(T)y + x}{\sigma_0}\right)dy\\
 & \leqslant &
 \frac{2\|K\|_{\infty}^{2}\|\nu\|_{\infty}}{\sigma_0h(T)}.
\end{eqnarray*}
Therefore,
\begin{displaymath}
\sum_{q = 2}^{\infty}\frac{J_{T,x}(q)^2}{q!} =_{T\rightarrow\infty}
O\left(\frac{1}{h(T)}\right).
\end{displaymath}
\end{proof}
\noindent
In order to prove the convergence result (\ref{SC_consistency_density_estimator_1}), since
\begin{displaymath}
R_{T,x}
\xrightarrow[T\rightarrow\infty]{}
\frac{1}{\sigma_0}\nu\left(\frac{x}{\sigma_0}\right),
\end{displaymath}
let us prove that
\begin{equation}\label{SC_consistency_density_estimator_2}
\left|\frac{1}{T}\int_{0}^{T}G_{T,x}(Y(s))ds\right|
\xrightarrow[T\rightarrow\infty]{\textrm L^2} 0.
\end{equation}
By the decomposition (\ref{Hermite_decomposition}) and Mehler's formula (\ref{Mehler_formula}) applied to $G_{T,x}$ and $Y$, for every $u,v\in [0,T]$,
\begin{displaymath}
\mathbb E(G_{T,x}(Y(u))G_{T,x}(Y(v))) =
\sum_{q = 1}^{\infty}\frac{J_{T,x}(q)^2}{q!}\rho(v - u)^q.
\end{displaymath}
So, since $\rho$ is a $[-1,1]$-valued function,
\begin{eqnarray*}
 \mathbb E\left(
 \left|\int_{0}^{T}G_{T,x}(Y(s))ds\right|^2
 \right) & = &
 \int_{0}^{T}\int_{0}^{T}
 |\mathbb E(G_{T,x}(Y(u))G_{T,x}(Y(v)))|dudv\\
 & \leqslant &
 \sum_{q = 1}^{\infty}\frac{J_{T,x}(q)^2}{q!}
 \int_{0}^{T}\int_{0}^{T}|\rho(v - u)|^qdudv\\
 & \leqslant &
 \left(\int_{0}^{T}\int_{0}^{T}|\rho(v - u)|dudv\right)
 \sum_{q = 1}^{\infty}\frac{J_{T,x}(q)^2}{q!}.
\end{eqnarray*}
Then, by Assumption \ref{conditions_autocorrelation} and Lemma \ref{asymptotic_behavior_J}:
\begin{displaymath}
\lim_{T\rightarrow\infty}
\mathbb E\left(
\left|
\frac{1}{T}
\int_{0}^{T}G_{T,x}(Y(s))ds\right|^2\right)
=\lim_{T\rightarrow\infty}
\frac{T^{2H - 2}}{h(T)} = 0.
\end{displaymath}
Therefore, the convergence result (\ref{SC_consistency_density_estimator_2}) is true.
%


%

%
\end{document}